\newtheorem{theorem}{Theorem}           
\newtheorem{lemma}[theorem]{Lemma}
\newtheorem{prop}[theorem]{Proposition}
\theoremstyle{definition}              
\theoremstyle{remark}                  
\newtheorem{step}{Step}
\newtheorem{remark}{Remark}
\DeclareMathOperator{\dist}{dist}                                   
\DeclareMathOperator{\BV}{BV}
\DeclareMathOperator{\SBV}{SBV}
\DeclareMathOperator{\Aut}{Aut}
\DeclareMathOperator{\inter}{int}
\newcommand{\abs}[1]{\left| #1 \right|}                             
\newcommand{\norm}[1]{\left\| #1 \right\|}                          
\DeclareMathAlphabet{\mathpzc}{OT1}{pzc}{m}{it}
\newcommand{\D}{\mathrm{D}}       
\renewcommand{\d}{\mathrm{d}}
\newcommand{\N}{\mathbb{N}}       
\newcommand{\R}{\mathbb{R}}
\newcommand{\Z}{\mathbb{Z}}
\renewcommand{\SS}{\mathbb{S}}
\newcommand{\G}{\mathbf{G}} 
\newcommand{\NN}{\mathscr{N}}     
\newcommand{\GG}{\mathscr{G}}
\renewcommand{\H}{\mathscr{H}}
\renewcommand{\L}{\mathscr{L}}
\newcommand{\eps}{\varepsilon}
\newcommand{\PR}{\mathbb{R}\mathrm{P}^2}  
\newcommand{\EE}{\mathscr{E}}
\newcommand{\RR}{\varrho}
\newcommand{\X}{\mathscr{X}}
\let\oldv\v
\renewcommand{\v}{\mathbf{v}}
\definecolor{lightblue}{rgb}{0.22,0.45,0.70}   
\definecolor{darkgray}{gray}{0.4}    
\definecolor{lightgray}{gray}{0.8}
\title{Lifting for manifold-valued maps of bounded variation}
 \author{
  Giacomo Canevari and Giandomenico Orlandi\thanks{
   Dipartimento di Informatica --- Universit\`a di Verona,
   Strada le Grazie 15, 37134 Verona, Italy. \\
   \emph{E-mail addresses}: \texttt{giacomo.canevari@univr.it},
   \texttt{giandomenico.orlandi@univr.it}
  }
 }
\date{\today}
\begin{document}

\maketitle

\begin{abstract}
 Let~$\NN$ be a smooth, compact, connected Riemannian manifold without boundary.
 Let $\EE\to\NN$ be the Riemannian universal covering of~$\NN$.
 For any bounded, smooth domain~$\Omega\subseteq\R^d$ and 
 any~$u\in\BV(\Omega, \, \NN)$, we show that
 $u$ has a lifting~$v\in\BV(\Omega, \, \EE)$.
 Our result proves a conjecture by Bethuel and Chiron.
\end{abstract}

\section{Introduction}

Let~$\NN$ be a smooth, compact, connected Riemannian manifold without boundary.
Let 
\[
 \pi\colon\EE\to\NN
\]
be the (smooth) universal covering of~$\NN$. 
We endow~$\EE$ with the pull-back metric, 
so that~$\pi$ is a local isometry.
Given a bounded, smooth domain~$\Omega\subseteq\R^d$
and measurable maps~$u\colon\Omega\to\NN$,
$v\colon\Omega\to\EE$, we say that~$v$
is a \emph{lifting} for~$u$ if $\pi\circ v = u$ a.e. on~$\Omega$.
We are interested in the 

\medskip
\textbf{Lifting problem.} 
Given a \emph{regular} map~$u\colon\Omega\to\NN$, is there
a lifting~$v\colon\Omega\to\EE$ of~$u$ that is as regular as~$u$?

\medskip
Of course, the answer 
depends on what we mean precisely by ``regular''.
If~$u$ is of class~$C^k$ (with~$k=0, \, 1, \, \ldots, \, \infty$)
and~$\Omega$ is simply connected, then
the lifting problem has a positive answer.
If other notions of regularity --- for instance, Sobolev regularity 
--- are considered, the problem may be more 
delicate. The lifting problem for non-continuous maps has been studied 
first when~$\NN$ is the unit circle, $\NN=\SS^1$,
in connection with the Ginzburg-Landay theory of superconductivity.
In this case, $\EE=\R$ and the covering map~$\pi\colon\R\to\SS^1$
is given by~$\pi(\theta) = \exp(i\theta)$. The study of this case was initiated
in~\cite{BethuelZheng, BethuelDemengel} and culminated with 
the work by Bourgain, Brezis and Mironescu~\cite{BourgainBrezisMironescu2005}, 
who gave a complete answer to the lifting problem when
$u\in W^{s,p}(\Omega, \, \SS^1)$, $s>0$, $1< p <+\infty$. 
Their results have been extended to the Besov setting
by Mironescu, Russ and Sire~\cite{MironescuRussSire}.
Another particular instance of the lifting problem
is the case when~$\NN$ is the real projective plane, $\NN=\PR$,
which is obtained from the $2$-dimensional sphere~$\SS^2$
by identifying pairs of antipodal points. 
The covering space~$\EE$ is then the 
sphere~$\SS^2$ and $\pi\colon\SS^2\to\PR$ is the natural projection.
$\PR$-valued maps and their lifting have a physical interpretation 
e.g. in materials science, as they serve as models for a class 
of materials known as (uniaxial) nematic liquid crystals 
--- see e.g.~\cite{BallZarnescu, BallBedford} for more details.
The lifting problem for $\PR$-valued maps, in the context of Sobolev~$W^{1,p}$-spaces,
has been studied e.g. by Ball and Zarnescu~\cite{BallZarnescu} 
and Mucci~\cite{Mucci-DCDS}.

For more general target manifolds~$\NN$,
the lifting problem in Sobolev spaces~$W^{s,p}(\Omega, \, \NN)$
was studied Bethuel and Chiron~\cite{BethuelChiron},
and only very recently it has been completely
settled by Mironescu and Van Schaftingen~\cite{MironescuVanSchaftingen}.
Among other results, Bethuel and Chiron proved that,
if~$\Omega$ is simply connected and~$p\geq 2$, 
then every map~$u\in W^{1,p}(\Omega, \, \NN)$
has a lifting~$v\in W^{1,p}(\Omega, \, \EE)$. However, there exist 
maps that belong to~$W^{1,p}(\Omega, \, \NN)$ for any~$p<2$,
and yet have no lifting in~$W^{1,p}(\Omega, \, \EE)$
--- for instance, we can take~$\NN=\SS^1$, $\Omega$ the unit disk in~$\R^2$,
and~$u(x) := x/\abs{x}$.
Bethuel and Chiron raised the conjecture~\cite[Remark~1]{BethuelChiron} that 
any map~$u\in W^{1,p}(\Omega, \, \NN)$, with~$p\geq 1$, has a lifting 
of bounded variation~(BV).

In this paper, we consider the lifting problem when $u$ is a BV-map.
Previous works showed that the lifting problem for~$u\in\BV(\Omega, \, \NN)$
has a positive answer in case~$\NN=\SS^1$
(Giaquinta, Modica and Sou\oldv{c}ek~\cite[Corollary~1 in Section~6.2.2]
{GiaquintaModicaSoucek-II},
Davila and Ignat~\cite{DavilaIgnat}, Ignat~\cite{Ignat-Lifting}),
$\NN=\R\mathrm{P}^k$ (Bedford~\cite{Bedford},
Ignat and Lamy~\cite{IgnatLamy})
and more generally, if the fundamental group of~$\NN$,
$\pi_1(\NN)$, is abelian~\cite{CO1}. 
The aim of this paper is to prove a lifting result
for maps $u\in\BV(\Omega, \, \NN)$ \emph{without}
assuming that~$\pi_1(\NN)$ is abelian.
Examples of closed manifolds with non-abelian fundamental group
are obtained by taking the quotient of~$\mathrm{SO}(3)$,
the set of rotations of~$\R^3$, by the symmetry group of a regular, convex polyhedron.
The elements of this quotient space
describe the possible orientations of the given polyhedron in~$\R^3$.
Manifolds of this form appear in
variational problems, arising from applications of different kinds.
For instance, in material science, they 
appear in models for ordered materials,
such as biaxial nematics 
(see e.g.~\cite{Mermin}). In numerical analysis,
they are found in Ginzburg-Landau
functionals with applications to mesh generation, via the so-called 
cross-field algorithms (see e.g.~\cite{CrossFields}). 

\paragraph{Setting.}
By Nash's theorem~\cite{Nash56}, we can embed isometrically
both~$\NN$ and~$\EE$ into Euclidean spaces, $\NN\subseteq\R^m$,
$\EE\subseteq\R^{\ell}$. Moreover, since~$\NN$, $\EE$
are complete Riemannian manifolds, we can choose the embeddings so
that the images of~$\NN$, $\EE$ are \emph{closed} subsets of~$\R^m$, $\R^{\ell}$,
respectively~\cite{Muller-ClosedEmbeddings}.
From now on, we will identify~$\NN$, $\EE$
with their closed Euclidean embeddings.
Given an open set~$\Omega\subseteq\R^d$,
we define $\BV(\Omega, \, \NN)$ as the set of maps $u\in\BV(\Omega, \, \R^m)$ 
that satisfy the pointwise constraint~$u(x)\in\NN$
for a.e.~$x\in\Omega$. We also define $\SBV(\Omega, \, \NN)$ 
as the set of maps~$u\in\BV(\Omega, \, \NN)$
such that the distributional derivative~$\D u$
(taken in the sense of~$\BV(\Omega, \, \R^m)$) has no Cantor part.
We define $\BV(\Omega, \, \EE)$, $\SBV(\Omega, \, \EE)$ in a similar fashion.
We write $\abs{\mu}\!(\Omega)$ to denote the total variation
of a vector-valued Radon measure~$\mu$ on~$\Omega$.

\begin{theorem} \label{th:main}
 Let~$\NN\subseteq\R^m$ be a smooth, compact,
 connected manifold without boundary. 
 Let~$\Omega\subseteq\R^d$ be a smooth, bounded domain with~$d\geq 1$.
 Then, any~$u\in\BV(\Omega, \, \NN)$ 
 has a lifting~$v\in\BV(\Omega, \, \EE)$ that satisfies
 \[
  \norm{v}_{L^1(\Omega)} \leq C_{\Omega, \, \NN}
   \left(\abs{\D u}\!(\Omega) + 1\right) \! , \qquad
  \abs{\D v}\!(\Omega) \leq C_{\Omega, \, \NN}\abs{\D u}\!(\Omega),
 \]
 where the constant~$C_{\Omega, \, \NN}$ depends only on~$\Omega$ 
 and (the given Euclidean embbeding of)~$\NN$.
 Moreover, if~$u\in\SBV(\Omega, \, \NN)$ 
 and~$v\in\BV(\Omega, \, \EE)$ is a lifting of~$u$, then
 $v\in\SBV(\Omega, \, \EE)$. 
\end{theorem}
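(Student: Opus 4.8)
We outline the strategy, pointing out where the real difficulty lies.

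\emph{The $\SBV$ statement} is the softer one and is independent of the construction of a lifting. Extend $\pi$ to a $C^1$ map $\Pi$ on a tubular neighbourhood of $\EE$ in $\R^\ell$, and let $v\in\BV(\Omega,\EE)$ be any lifting of $u$. Applying the chain rule for the composition of a $C^1$ map with a $\BV$ map (Vol'pert, Ambrosio--Dal Maso) to the identity $u=\Pi\circ v$ and comparing Cantor parts, one obtains $\D^c u=\nabla\Pi(\widetilde v)\,\D^c v$, where $\widetilde v$ is the precise representative of $v$. As happens for every $\EE$-valued $\BV$ map, the measure $\D^c v$ is tangent to $\EE$ at $|\D^c v|$-almost every point; at such points $\nabla\Pi(\widetilde v)$ agrees with the differential of the local isometry $\pi$ and is therefore injective. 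Hence $\D^c u=0$ forces $\D^c v=0$, i.e.\ $v\in\SBV(\Omega,\EE)$.

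For the existence and the two estimates I would argue in three steps. \emph{Step 1 (reduction to grid-constant maps).} It suffices to treat $u$ in a class dense enough in $\BV(\Omega,\NN)$, for instance maps that are constant, with value in $\NN$, on each cell of a fine cubical grid: from a general $u$ such approximations $u_\eps$ are obtained by averaging $u$ over the cells of an $\eps$-grid (taken generic with respect to $\partial\Omega$), composing with the nearest-point projection onto $\NN$ where this is defined, and taking a pointwise value of $u$ on the negligible set of remaining cells; one checks that $u_\eps\to u$ in $L^1$ and $\limsup_{\eps\to 0}\abs{\D u_\eps}\!(\Omega)\le C_{\Omega,\NN}\abs{\D u}\!(\Omega)$. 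If each $u_\eps$ admits a lifting $v_\eps$ obeying the bounds of the theorem relative to $u_\eps$, then $\{v_\eps\}$ is bounded in $\BV(\Omega,\R^\ell)$, a subsequence converges in $L^1$ and a.e.\ to some $v$ with values in the closed set $\EE$ and $\pi\circ v=u$ a.e., and both bounds pass to the limit by lower semicontinuity of the total variation and the $\limsup$ estimate above. So we may assume $u$ is constant, equal to $c_Q\in\NN$, on each cell $Q$ of an $\eps$-grid, with $\abs{\D u}\!(\Omega)=\sum_{Q,Q'}\abs{c_Q-c_{Q'}}\,\H^{d-1}(Q\cap Q')$ up to controlled boundary terms, the sum over pairs of cells sharing a $(d-1)$-face.

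\emph{Step 2 (local liftings; the combinatorial problem).} Since $\pi$ is a covering map and $\NN$ is compact there is $\delta_0>0$ such that every ball of $\NN$ of radius $\delta_0$ is evenly covered; and since $\pi$ is a local isometry, for any $y,z\in\NN$ and $\widetilde y\in\pi^{-1}(y)$ the endpoint $\widetilde z\in\pi^{-1}(z)$ of the lift from $\widetilde y$ of a minimizing geodesic from $y$ to $z$ satisfies
\[
 \abs{\widetilde y-\widetilde z}_{\R^\ell}\ \le\ \dist_\EE(\widetilde y,\widetilde z)\ \le\ \dist_\NN(y,z)\ \le\ C_\NN\abs{y-z},
\]
and is the unique point of $\pi^{-1}(z)$ with $\dist_\EE(\widetilde y,\widetilde z)<\delta_0$ when $\dist_\NN(y,z)<\delta_0$ (a \emph{continuation} of $\widetilde y$). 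Choosing $\widetilde c_Q\in\pi^{-1}(c_Q)$ for every cell, the problem reduces to bounding the combinatorial cost $\sum_{Q,Q'}\abs{\widetilde c_Q-\widetilde c_{Q'}}\,\H^{d-1}(Q\cap Q')$ by $C_{\Omega,\NN}\abs{\D u}\!(\Omega)$. Propagating $\widetilde c$ by continuation along a spanning tree of the cell-adjacency graph, each tree edge contributes at most $C_\NN\abs{c_Q-c_{Q'}}\,\H^{d-1}(Q\cap Q')$, so the only terms left to estimate are the non-tree edges whose fundamental cycle carries a non-trivial $u$-monodromy, an element of $\pi_1(\NN)$.

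\emph{Step 3 (controlling the monodromy; the main obstacle).} Non-trivial monodromies are confined to the ``bad'' part of the grid: the $(d-2)$-cells around which the four transverse values of $u$ span more than $\delta_0$, together with the $(d-1)$-faces across which $u$ jumps by more than $\delta_0$; since each such jump contributes at least $\delta_0\,\H^{d-1}$ to $\abs{\D u}\!(\Omega)$, the bad faces have $\H^{d-1}$-measure at most $C_\NN\delta_0^{-1}\abs{\D u}\!(\Omega)$. The core of the proof is to select the spanning tree together with a $(d-1)$-dimensional cut-complex $\Sigma$ running through the bad region and joining its topologically non-trivial features to one another and to $\partial\Omega$, in such a way that $\H^{d-1}(\Sigma)\le C_{\Omega,\NN}\abs{\D u}\!(\Omega)$ while every loop in $\Omega\setminus\Sigma$ has trivial $u$-monodromy — so that continuations are globally consistent off $\Sigma$ and the resulting $v$ jumps only across $\Sigma$ and the big-jump faces, by amounts controlled by $\abs{\D u}$. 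When $\pi_1(\NN)$ is abelian this is carried out in~\cite{CO1} by an isoperimetric / minimal-connection argument with coefficients in $\pi_1(\NN)$; the non-abelian case is the hard part, because the obstruction is no longer a flat chain that can be filled by a deformation theorem, and one must instead pair up and connect the singular features in a way compatible with the order and conjugations under which the corresponding group elements combine, while keeping the total connection cost linear in $\abs{\D u}\!(\Omega)$. Once $v$ is built, the $L^1$ bound follows by normalizing the lifting (composing with a deck transformation of $\EE$) so that its average over $\Omega$ lies in a fixed bounded fundamental domain of $\EE$ and invoking the $\BV$ Poincaré inequality; the additive constant accounts for the case $\D u=0$.
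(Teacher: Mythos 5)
Your $\SBV$ step is plausible and close in spirit to what the paper does (it refers back to~\cite[Theorem~3, Step~4]{CO1}), though you would need to localise the chain-rule argument carefully since~$\EE$ may be non-compact and an extension~$\Pi$ of~$\pi$ to a tubular neighbourhood of~$\EE$ in~$\R^\ell$ need not be globally Lipschitz; this is routine. Likewise, your limiting argument, the reduction to a cube, and the normalisation by a deck transformation via Lemma~\ref{lemma:cover} and the BV Poincar\'e inequality are all in line with Section~\ref{sect:main}.

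The genuine gap is in Step~3, and you essentially say so yourself. Having reduced the problem to choosing lifts~$\widetilde{c}_Q\in\pi^{-1}(c_Q)$ of cell values and paying for the non-tree edges of the adjacency graph, you need to produce a cut-complex~$\Sigma$ with~$\H^{d-1}(\Sigma)\leq C\abs{\D u}\!(\Omega)$ that kills all non-trivial $u$-monodromy. When~$\pi_1(\NN)$ is abelian this is a minimal-connection/deformation-theorem argument for flat chains with coefficients in~$\pi_1(\NN)$, as in~\cite{CO1}. When~$\pi_1(\NN)$ is non-abelian there is no such chain calculus, and your proposal stops at describing the difficulty (``pair up and connect the singular features in a way compatible with the order and conjugations\dots while keeping the total connection cost linear'') without giving a construction. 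That missing construction is precisely the content of the theorem; the sentence does not tell us how to select~$\Sigma$, how to make the conjugation-bookkeeping globally consistent, nor why the cost is linear in~$\abs{\D u}\!(\Omega)$.

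The paper bypasses this combinatorial problem entirely, and this is where the two strategies truly diverge. Instead of seeking an optimal cut adapted to the topology of~$u$, the paper approximates~$u$ by piecewise-affine maps~$u_j$ into the cube~$Q^m_\Lambda$ (not into~$\NN$) and projects them with a perturbed retraction~$\RR_y$ built in Proposition~\ref{prop:X}. Then it cones the map to a constant by the linear interpolation~$U(t,x)=(1-t)u_j(x)+t\,u_*$, and takes as cut the set~$T_y=\tau\bigl((U-y)^{-1}(\X)\bigr)$, the shadow of the preimage of the dual skeleton~$\X$. Two estimates replace your Step~3: Lemma~\ref{lemma:inverseimage} gives~$\int_{B^m_\sigma}\H^{d-1}(T_y)\,\d y\lesssim\int\abs{u_j-u_*}\abs{\nabla u_j}$, so a good~$y$ can be chosen by averaging; and, crucially, Proposition~\ref{prop:X}(iv) shows that the image of any straight segment in~$Q^m_M\setminus\X$ under~$\RR$ has length bounded by a \emph{universal} constant. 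Feeding the vertical segments~$t\mapsto U(t,x_\pm)$ into this estimate bounds the jump of the lifting across~$T_y$ by a constant independent of~$u$, see~\eqref{smooth3}. The upshot is that the paper never solves (or even poses) a minimal-connection problem with non-abelian coefficients: the cut is given for free by the cone construction, and the jump size is controlled by a geometric property of the retraction rather than by an isoperimetric argument. If you want to rescue the combinatorial route, you would have to supply the missing non-abelian connection lemma; as written, your proof is incomplete at its central step.
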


Theorem~\ref{th:main} implies, in particular,
that a map~$u\in W^{1,p}(\Omega, \, \NN)$ with~$p \geq 1$ 
has a lifting~$v\in\SBV(\Omega, \, \EE)$, thus proving
Bethuel and Chiron's conjecture in~\cite{BethuelChiron}.

The proof of Theorem~\ref{th:main} relies on ideas
from~\cite[Theorem~3]{CO1}. However, the results of~\cite{CO1}
were formulated in terms of flat chains with coefficients
in the group~$\pi_1(\NN)$. We do \emph{not} take this point of view here,
because the theory of flat chains requires the 
coefficient group~$\pi_1(\NN)$ to be abelian.
Nevertheless, the global structure of the proof
is similar to that of~\cite{CO1}:
we approximate a given map~$u\colon\Omega\to\NN$
with \emph{piecewise-affine} maps~$u_j\colon\Omega\to\R^{m}$;
we project the~$u_j$'s onto~$\NN$, so to define maps 
$\Omega\to\NN$ with polyhedral singularities; 
we lift the re-projected maps to~$v_j\colon\Omega\to\EE$,
by means of topological arguments;
and finally, we pass to the limit, thus obtaining 
a lifting~$v\colon\Omega\to\EE$ of~$u$. Our approach is, 
by its nature, extrinsic --- that is, it depends upon 
the choice of embeddings.

The paper is organised as follows. In Section~\ref{sect:prelim},
we revisit the construction of a locally Lipschitz retraction 
$\R^m\setminus\X\to\NN$, where~$\X$ is a lower-dimensional, 
compact subset of~$\R^m$ (Section~\ref{sect:X}), and recall some topological
properties of the covering~$\pi$ (Section~\ref{sect:aut}).
Section~\ref{sect:affine} contains the 
core of the proof: we construct a lifting for a particular class of
$\NN$-valued maps, those that are obtained from piecewise-affine maps
by projection onto~$\NN$. We complete the proof of 
Theorem~\ref{th:main} in Section~\ref{sect:main}.

\numberwithin{equation}{section}
\numberwithin{definition}{section}
\numberwithin{theorem}{section}
\numberwithin{remark}{section}
\numberwithin{example}{section}

\section{Preliminaries}
\label{sect:prelim}

\subsection{Projecting onto~$\NN$}
\label{sect:X}

As in~\cite{CO1}, our arguments rely on the following topological property
(see e.g. \cite[Lemma~6.1]{HardtLin-Minimizing}, 
\cite[Proposition~2.1]{BousquetPonceVanSchaftingen-I}, 
\cite[Lemma~4.5]{Hopper}).
We recall that, given a topological space~$A$ and a subset~$B\subseteq A$,
a retraction~$\RR\colon A\to B$ is a continuous map such that
$\RR(z) = z$ for any~$z\in B$. 

\begin{prop}\label{prop:X}
 Let~$\NN$ be a smooth, compact, connected submanifold of~$\R^m$,
 without boundary. Let~$M > 0$ be such that $\NN$ is contained
 in the interior of cube $Q^m_M := [-M, \, M]^{m}$.
 Then, there exist a closed set~$\X\subseteq Q^m_M\setminus\NN$ 
 and a locally Lipschitz retraction
 $\RR\colon Q^m_M\setminus\X\to\NN$ with the following properties.
 \begin{enumerate}[label=(\roman*)]
  \item $\X$ is a finite union of
  polyhedra of dimension~$m-2$ at most.
  \item $\RR$ is smooth in a neighbourhood of~$\NN$.
  \item There exists a constant~$C_0>0$ such that
  \[
   \abs{\nabla\RR(z)} \leq C_0 \dist^{-1}(z, \, \X)
  \]
  for a.e.~$z\in Q^m_M\setminus\X$.
  \item There exists a constant~$C_1>0$ such that,
  if~$\gamma\colon [0, \, 1]\to Q^m_M\setminus\X$
  is an injective, Lipschitz map that parametrises a
  straight line segment, then
  \[
   \int_0^1 \abs{(\RR\circ\gamma)^\prime(t)} \d t \leq C_1.
  \]
 \end{enumerate}
\end{prop}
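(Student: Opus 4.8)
The plan is to split~$Q^m_M$ into a neighbourhood of~$\NN$, where a retraction is furnished directly by the ambient tubular neighbourhood, and the complementary region, where the retraction is produced by the classical Federer--Fleming dimension-reduction argument (as in \cite[Lemma~6.1]{HardtLin-Minimizing} and the other references quoted above). I would first fix~$\rho_0>0$ so small that the nearest-point projection $\Pi\colon\NN_{\rho_0}\to\NN$, with $\NN_{\rho_0}:=\{z\in\R^m:\dist(z,\NN)<\rho_0\}$, is well defined, smooth, and globally Lipschitz with $\abs{\nabla\Pi}\le C$. Then I would cubulate~$Q^m_M$ at a scale~$\delta$ small enough (depending on~$\rho_0$) and let~$A$ be a polyhedral neighbourhood of~$\NN$ made of closed cubes of this grid --- for instance the union of the closed cubes meeting~$\NN$ together with the cubes adjacent to them --- so that, for~$\delta$ small, $A\subseteq\NN_{\rho_0}$ while $\NN\subseteq\inter A$. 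The goal is then to construct a locally Lipschitz retraction $r\colon Q^m_M\setminus\X\to A$ satisfying the analogues of~(i), (iii), (iv); once this is done I set $\RR:=\Pi\circ r$. Since~$r$ fixes~$A$ pointwise it is the identity on a neighbourhood of~$\NN$ contained in~$\inter A$, so~$\RR$ is a retraction onto~$\NN$ that coincides with the smooth map~$\Pi$ near~$\NN$; this gives~(ii), and composing the Lipschitz map~$\Pi$ with~$r$ preserves~(iii) and~(iv) up to enlarging the constants.

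To build~$r$ I would run the Federer--Fleming procedure on the complement region $W:=\overline{Q^m_M\setminus A}$, a union of grid cubes. Proceeding by downward induction on dimension, at each stage one retracts every cell not contained in~$A$ onto its boundary by a central projection from a carefully chosen interior point; these projections are the identity on~$A$ and on the already-treated lower skeleton, so they glue to a locally Lipschitz map, and since every cell not in~$A$ is joined to~$\partial A$ through cells not in~$A$ (because~$A$ is interior to~$Q^m_M$) the final map retracts~$W$, minus the accumulated singular set, onto~$\partial A$, hence retracts~$Q^m_M\setminus\X$ onto~$A$. Organised in the standard way, the singular set~$\X$ is a subcomplex of the dual grid of dimension at most~$m-2$ --- the bound $m-2$ rather than $m-1$ being what gives~(i), and being classical. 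Property~(iii) holds because a central projection from a point~$p$ on a bounded convex cube has gradient at most $C\dist^{-1}(\cdot,p)$, and because what is removed at each stage is a whole cone with vertex~$p$ and not merely~$p$: a point whose image is pushed close to the singular set of the next stage already lay at a comparable distance from~$\X$, so iterating the at most~$m$ stages preserves a bound of the form $\abs{\nabla r}\le C_0\dist^{-1}(\cdot,\X)$ almost everywhere.

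The hard part will be~(iv). Integrating~(iii) along a straight segment~$\gamma\colon[0,1]\to Q^m_M\setminus\X$ gives only $\int_0^1\abs{(\RR\circ\gamma)'(t)}\,\d t\le C_0\int_0^1\dist^{-1}(\gamma(t),\X)\,\abs{\gamma'(t)}\,\d t$, and although the right-hand side is finite for each fixed~$\gamma$, it is not bounded uniformly: it blows up when~$\gamma$ runs close to, and nearly parallel to, a codimension-$2$ face~$P$ of~$\X$, since then $\dist^{-1}(\cdot,P)$ is barely integrable along~$\gamma$. The remedy is to exploit the \emph{radial} structure of the construction near~$\X$. Central projections commute with dilations about their centre, so near the relative interior of each codimension-$2$ face~$P$ of~$\X$ the retraction~$\RR$ depends, in the two dimensions normal to~$P$, only on the angular variable --- the unit vector pointing from~$P$ towards the point --- and not on $r_P(z):=\dist(z,P)$; locally it looks like the map $z\mapsto z/\abs{z}$ on~$\R^2\setminus\{0\}$. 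Hence $\abs{\nabla\RR}$ is of size $C/r_P$ only in the angular directions, its radial component staying bounded, and $(\RR\circ\gamma)'$ is controlled by $C\,r_P^{-1}(\gamma(t))$ times the angular speed of~$\gamma$ around~$P$. The decisive geometric point is that the orthogonal projection of a \emph{straight segment} onto the two-plane normal to~$P$ is again a segment, hence sweeps a total angle strictly less than~$\pi$ about the origin --- so the total angular variation of~$\gamma$ around~$P$ is at most a universal constant, and the corresponding contribution to the integral is bounded independently of~$\gamma$. Away from a fixed-size neighbourhood of~$\X$ the map~$\RR$ is globally Lipschitz, so the remaining contribution is controlled by the diameter of~$Q^m_M$. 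Summing over the finitely many faces~$P$ of~$\X$ and the at most~$m$ stages of the construction yields~(iv) with a constant~$C_1$ depending only on~$M$ and the embedding of~$\NN$. Carrying this angular bookkeeping through the composition of central projections, and checking it near the lower-dimensional faces of~$\X$ as well, is the step I expect to be the main obstacle; the rest is routine.
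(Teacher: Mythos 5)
Your overall architecture coincides with the paper's: a polyhedral neighbourhood of~$\NN$ handled by the tubular-neighbourhood projection, and a Federer--Fleming radial retraction through the skeleta of a fixed grid on the complement, with singular set the dual $(m-2)$-skeleton. Properties~(i)--(iii) are handled the same way (your heuristic for~(iii) is exactly the paper's estimate $\abs{z-z_0}\ge (M/2q)\,\dist(z,R'_{m-2})/\dist(\xi_K(z),R'_{m-2})$). One small inaccuracy in the plan: the iterated central projections do not retract $W=\overline{Q^m_M\setminus A}$ onto~$\partial A$; they land on $R_1\cup\partial A$, where $R_1$ is the $1$-skeleton. The paper handles $R_1$ by extending the smooth retraction $\RR_W$ to $R_1\cup W$ (assign arbitrary $\NN$-values at vertices and connect along each edge by a Lipschitz path in~$\NN$); your $r\colon Q^m_M\setminus\X\to A$ followed by $\Pi$ does not account for this and would need the same patch.

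For~(iv), however, you take a genuinely different route, and the obstacle you flag at the end is a real one. Your degree-$0$ homogeneity observation is correct near the \emph{relative interior} of a top-dimensional (codimension-$2$) face~$P$ of~$\X$: there $\RR$ factors through the angular variable in the $2$-plane normal to~$P$, and a straight segment sweeps total angle $<\pi$ about~$P$. But near a $k$-face with $k<m-2$, the normal space has dimension $m-k>2$, so there is no single angular variable; what one has instead is degree-$0$ homogeneity about the face, i.e. $\RR$ factors through $S^{m-k-1}$. The restriction to that sphere is itself singular (on the traces of the higher-dimensional faces of~$\X$), so the argument would have to recurse --- and now the projection of a straight segment to $S^{m-k-1}$ is a great-circle arc, not a Euclidean segment, so the ``bounded subtended angle'' lemma no longer applies verbatim and must be reproved in spherical geometry, across compositions of several $\xi_K$'s. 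This recursion is exactly what you rightly anticipate as the main difficulty, and it is not routine. The paper avoids it entirely by a combinatorial argument: the radial retraction $\xi_K$ of a cube maps any straight segment to a \emph{finite union of segments} in lower-dimensional faces (Property~(b)), and has multiplicity $\le 1$ off a single point (Property~(c)); iterating, $\sigma_m(L)$ lies in a fixed set of bounded $\H^1$-measure, namely $(L\cap W)\cup R_1\cup\bigcup_i L_i$ with $L_i\subseteq\partial W$, with multiplicity bounded by a constant depending only on~$m$. The area formula then gives~(iv) at once, with no delicate analysis near lower strata. If you want to pursue your angular approach, you would need a careful inductive statement about the restriction of $\sigma_m$ to small spheres around each face of~$\X$ and a replacement for the ``segments subtend bounded angle'' fact on spheres; the paper's (b)/(c) route is substantially shorter.
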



As mentioned above, there are several references that prove the 
existence of~$\X$ and~$\RR$ satisfying Properties~(i)--(iii). 
However, we have not been able to find a reference for Property~(iv)
(although it is, to some extent, reminiscent of the Deformation Theorem 
for integral currents, see e.g. \cite[Theorem~4.2.9]{Federer}).
Since Property~(iv) is crucial for us, we provide 
a proof below. Given an integer~$q\geq 1$, we define 
the grid~$\GG$ on~$Q^m_M$ of size~$M/q$
as the collection of cubes
\begin{equation} \label{grid}
 \GG := \left\{ \frac{Mz}{q} 
  + \left[0, \, \frac{M}{q}\right]^{m} 
  \colon z\in\Z^{m}\cap[- q, \, q - 1]^{m} \right\} \! .
\end{equation}
For~$j\in\{0, \, 1,  \, \ldots, \, m\}$, 
we denote by~$\GG_j$ the collection of the (closed)~$j$-faces of cubes in~$\GG$. 
We define the~$j$-skeleton of~$\GG$ as~$R_j := \cup_{K\in\GG_j} K$.
We define the dual grid to~$\GG$ as 
\[
 \GG^\prime := \left\{ \left(\frac{M}{2q}, \, \frac{M}{2q}, \, \ldots, \, \frac{M}{2q}\right) 
  + \frac{Mz}{q} 
  + \left[0, \, \frac{M}{q}\right]^{m} 
  \colon z\in\Z^{m}\cap[- q - 1, \, q - 1]^{m} \right\} \! .
\]
We denote by~$R^\prime_j$ the $j$-skeleton of~$\GG^\prime$.

\begin{proof}[Proof of Proposition~\ref{prop:X}]
 Given a $j$-dimensional cube~$K\in\G_j$ 
 of centre~$\bar{z}$, we denote by
 $\xi_K\colon K\setminus\{\bar{z}\}\to\partial K$ 
 the radial retraction onto its boundary.
 If we rotate and dilate~$K$ so to 
 have~$K = \{z\in \bar{z} + [-1 , \, 1]^m\colon 
 z_{j+1} = \bar{z}_{j+1}, \, \ldots, \, z_m = \bar{z}_{m}\}$, then
 \[
  \xi_{K} (z) := \frac{z - \bar{z}}{\max_{1 \leq i \leq j} 
  \abs{z_i - \bar{z}_{i}}} 
  \qquad \textrm{for } z\in K\setminus\{\bar{z}\}.
 \]
 The map~$\xi_K$ has the following properties:
 \begin{enumerate}[label=(\alph*)]
  \item $\xi_K$ is locally Lipschitz, 
  and~$\abs{\nabla\xi_K(z)}\leq C\abs{z-\bar{z}}^{-1}$.
  \item If~$L\subseteq K\setminus\{\bar{z}\}$
  is a straight line segment, then~$\xi_K(L)$ is
  a finite union of segments,
  each one contained in a $(j-1)$-face of~$\partial K$.
  \item If~$L\subseteq K\setminus\{\bar{z}\}$
  is a straight line segment, then for all but at most one~$z\in\partial K$
  we have $\H^0(L\cap \xi_K^{-1}(z)) \leq 1$.
  Indeed, for any~$z\in\partial K$, the inverse image~$\xi_K^{-1}$
  is contained in a straight line~$L_z$; 
  if~$L_z\cap L$ contains more than one point,
  then $L_z\supseteq L$ and~$\xi_K(L) = \{z\}$. 
 \end{enumerate}
  
 Since~$\NN$ is compact and smooth, for $r>0$ small enough
 the $r$-neighbourhood of~$\NN$ retracts smoothly onto~$\NN$
 (by nearest-point projection onto~$\NN$, for instance).
 Let us fix an integer~$q\geq 1$, and let us consider the grid~$\GG$
 of size~$M/q$, defined by~\eqref{grid}. If~$q$ is large enough, 
 there exists a set~$W\subseteq\ Q^m_M$ that is a finite union 
 of cubes of~$\GG$, contains~$\NN$ in its interior,
 and retracts smoothly onto~$\NN$. Let~$\RR_W\colon W\to\NN$
 be a smooth retraction. We extend~$\RR_W$ to a Lipschitz
 map~$R_1\cup W\to\NN$, still denoted~$\RR_W$ for simplicity.
 To do so, we first take an arbitrary extension
 $R_0\cup W\to\NN$ of~$\RR_W$. Then, for any ($1$-dimensional) 
 edge~$E$ of~$\GG$ with~$E\not\subseteq W$, we 
 define~$\RR_W\colon E\to\NN$ as a Lipschitz path that
 joins the values at the endpoints. Such a path exists, 
 because~$\NN$ is connected.
 
 We construct a locally Lipschitz retraction
 $\sigma_2\colon (R_2\setminus R^\prime_{m-2})\cup (R_2\cap W) \to
 R_1\cup (R_2\cap W)$, in the following way.
 If~$K\in\GG_2$ is contained in~$W$, then we must
 define~$\sigma_2$ to be the identity on~$K$.
 Take a $2$-dimensional cube~$K\in\GG_2$
 that is not contained in~$W$. We observe that  
 $K\cap R^\prime_{m - 2}$ is exactly the centre of~$K$, and we define
 $\sigma_2(z) := \xi_K(z)$ for~$z\in K\setminus R^\prime_{m-2}$.
 If~$K_1, \, K_2\in\GG_2$ share a common edge~$E\in\GG_1$, then
 $\xi_{K_1}(z) = \xi_{K_2}(z) = z$ for any~$z\in E$,
 so the definition of~$\sigma_2$ is consistent. Moreover,
 $\sigma_2$ is locally Lipschitz and
 \begin{equation} \label{X1}
  \abs{\nabla\sigma_2(z)} \leq C \dist^{-1}(z, \, R^{\prime}_{m-2})
  \qquad \textrm{for a.e. } z\in R_2\setminus R^\prime_{m-2},
 \end{equation}
 by~(a) above.
 
 We contruct now a locally Lipschitz retraction
 $\sigma_3\colon (R_3\setminus R^\prime_{m-2})\cup (R_3\cap W)\to
 R_1 \cup (R_3\cap W)$, in a similar way.
 Given a cube~$K\in\GG_3$ that is not contained in~$W$, we observe that  
 $\xi^{-1}_K (\partial K\cap R^\prime_{m - 2}) = K\setminus R^\prime_{m-2}$,
 and define $\sigma_3(z) := \sigma_2(\xi_K(z))$ 
 for~$z\in K\setminus R^\prime_{m-2}$.
 Again, this definition is consistent.
 Moreover, let~$z_0$ be the centre of the cube~$K$,
 let~$z\in K\setminus\{z_0\}$,
 and let $F$ be a $2$-dimensional face of~$\partial K$, such that
 $\xi_K(z)\in F$. Using the chain rule,
 Property~(a) and~\eqref{X1} above, we deduce that
 \[
  \begin{split}
   \abs{\nabla\sigma_3(z)} &\leq \abs{(\nabla\sigma_2)(\xi_K(z))} \abs{\nabla\xi_K(z)}
   \leq C \dist^{-1}(\xi_K(z), \, R^\prime_{m-2}) \abs{z - z_0}^{-1}.
  \end{split}
 \]
 On the other hand, if~$w$ denotes the projection of~$z$ onto~$R^\prime_{m-2}$,
 we have 
 \[
  \abs{z - z_0} \geq \abs{w - z_0} = \frac{M}{2q} 
  \frac{\dist(z, \, R^\prime_{m-2})}{\dist(\xi_K(z), \, R^\prime_{m-2})}
 \]
 (see Figure~\ref{fig:retraction}). As a result,
 \begin{equation} \label{X2}
  \abs{\nabla\sigma_3(z)} \leq \frac{2Cq}{M} \dist^{-1}(z, \, R^\prime_{m-2}).
 \end{equation}
 \begin{figure}[t]
		\centering
		\includegraphics[height=.3\textheight]{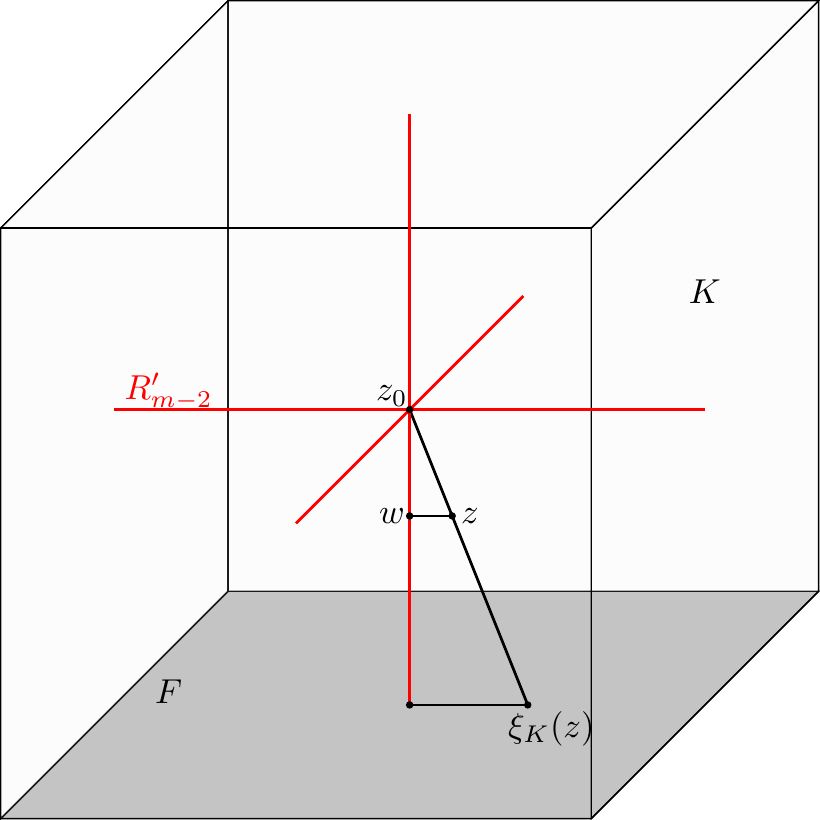}
	\caption{Estimate on~$\abs{z - z_0}$, in case~$m=3$
	(see the proof of Proposition~\ref{prop:X}).}
	\label{fig:retraction}
 \end{figure}
 By induction, we define a sequence of locally Lipschitz retractions
 $\sigma_j\colon (R_j\setminus R^\prime_{m-2})\cup(R_j\cap W)
 \to R_1\cup(R_j\cap W)$,
 for~$j= 4, \, \ldots, \, m$. We take~$\X$ as the closure of 
 $R^{\prime}_{m-2}\cap (Q^m_M\setminus W)$,
 and $\RR := \RR_W\circ\sigma_m$. Properties~(i), (ii)
 is now immediate, while~(iii) follows 
 from~\eqref{X2} by an inductive argument.
 Let~$\gamma\colon [0, \, 1]\to Q^m_M\setminus\X$ be an injective, Lipschitz 
 map that parametrises a straight line segment~$L\subseteq Q^m_M\setminus\X$. 
 By applying Property~(b) iteratively,
 we see that
 \[
  \sigma_m(L) \subseteq (L\cap W) \cup R_1 \cup \bigcup_{i=1}^p L_i,
 \]
 where the $L_i$'s are straight line segments, 
 each one contained in a~$(m-1)$-face of~$\partial W$. 
 By the area formula, we have
 \[
  \begin{split}
   \int_0^1 \abs{(\sigma_m\circ\gamma)^\prime(t)} \d t 
   \leq \H^1(L\cap W) 
   &+ \int_{R_1} \H^0((\sigma_m\circ\gamma)^{-1}(z)) \, \d\H^1(z) \\
   &+ \sum_{i=1}^p \int_{L_i} 
    \H^0((\sigma_m\circ\gamma)^{-1}(z)) \, \d\H^1(z)
  \end{split}
 \]
 By Properties~(b), (c) and an inductive argument
 we deduce that, for $\H^1$-a.e.~$z\in \cup_i L_i \cup R_1$,
 $\H^0(L\cap\sigma_m^{-1}(z))$ is bounded in terms of~$m$ only.
 Since~$\gamma$ is injective, $\H^0((\sigma_m\circ\gamma)^{-1}(z))$
 is also bounded in terms of~$m$ only. As a result,
 \[
  \begin{split}
   \int_0^1 \abs{(\sigma_m\circ\gamma)^\prime(t)} \d t
   \leq C\left( \mathrm{diam}(W) + \H^1(R_1) 
   + \sum_{K\in\GG_{m-1}, \, K\subseteq\partial W} \mathrm{diam}(K) \right) \!,
  \end{split}
 \]
 where~$\mathrm{diam}$ denotes the diameter. 
 Now~(iv) follows, because $\RR =\RR_W\circ\sigma_m$
 and~$\RR_W$ is Lipschitz.
\end{proof}

Let us choose $M$, $\X$ and~$\RR$ as in Proposition~\ref{prop:X},
once and for all. Let~$\sigma>0$ be a small parameter, such that
$\NN\subseteq (-M + \sigma, \, M - \sigma)^m$.
Let~$B^m_\sigma := \{y\in\R^m\colon \abs{y}<\sigma\}$,
and
\begin{equation} \label{Lambda}
 \Lambda := M - \sigma .
\end{equation}
For any~$y\in B^m_\sigma$, the map~$\tilde{\RR}_y \colon z\mapsto\RR(z - y)$
is well defined and locally Lipschitz 
in~$Q^m_\Lambda\setminus(\X+y)$.
Moreover, reducing the value of~$\sigma>0$ if necessary,
the restriction~$\tilde{\RR}_{y|\NN}$ is a small, smooth
perturbation of the identity --- in particular, it is a diffeomorphism.
We define 
\begin{equation}\label{RR_y}
 \RR_y(z) := \left(\left(\tilde{\RR}_{y|\NN}\right)^{-1} \circ\RR\right)(z - y)
 \qquad \textrm{for } z\in Q^m_\Lambda\setminus(\X+y), \ y\in B^m_\sigma.
\end{equation}

\begin{lemma}\label{lemma:projection}
 Let~$\Omega\subseteq\R^d$ be a bounded domain.
 For any~$u\in W^{1,1}(\Omega, \, Q^m_\Lambda)$
 and a.e.~$y\in B^m_\sigma$, the map~$\RR_y\circ u$
 belongs to~$W^{1,1}(\Omega, \, \NN)$.
 Moreover, there exists a constant~$C_\Lambda$ (depending only 
 on~$\NN$, $m$, $\X$, $\RR$, $\sigma$ and~$\Lambda$) such that
 \begin{equation*}
  \int_{B^m_\sigma} \norm{\nabla (\RR_y\circ u)}_{L^1(\Omega)} \, \d y \leq 
  C_\Lambda \norm{\nabla u}_{L^1(\Omega)} \! .
 \end{equation*}
\end{lemma}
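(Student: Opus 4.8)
The plan is to prove Lemma~\ref{lemma:projection} by a Fubini-type averaging argument, exploiting Property~(iv) of Proposition~\ref{prop:X} to control the length of the curves traced out by~$\RR_y\circ u$ along lines.

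\textbf{Reduction via the coarea / slicing structure of $W^{1,1}$.}
First I would recall that a map $u\in W^{1,1}(\Omega, \, \R^m)$ is characterised, up to modification on a null set, by absolute continuity along almost every line parallel to each coordinate axis, with the one-dimensional derivatives assembling into $\nabla u\in L^1$. Writing $x = (x_i, \, \hat x_i)$ for the splitting of the $i$-th coordinate from the others, and $\Omega_{\hat x_i}$ for the corresponding slice, one has
\[
 \norm{\nabla u}_{L^1(\Omega)}
 = \sum_{i=1}^d \int \left( \int_{\Omega_{\hat x_i}}
   \abs{\partial_i u(x_i, \, \hat x_i)} \, \d x_i \right) \d\hat x_i
\]
up to dimensional constants (and similarly with $\nabla(\RR_y\circ u)$, once we know it is Sobolev). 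So it suffices to estimate, for each $i$ and a.e.\ slice, the quantity $\int_{B^m_\sigma} \int_{\Omega_{\hat x_i}} \abs{\partial_i(\RR_y\circ u)} \, \d x_i \, \d y$ in terms of $\int_{\Omega_{\hat x_i}} \abs{\partial_i u} \, \d x_i$, and then integrate in $\hat x_i$.

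\textbf{The one-dimensional estimate via Property~(iv) and an averaging trick.}
Fix $i$ and a slice, and let $t\mapsto u(t)$ (writing $t$ for $x_i$) be the restriction, an absolutely continuous curve in $Q^m_\Lambda$. The issue is that $\RR_y$ is only locally Lipschitz away from the lower-dimensional polyhedral set $\X + y$, and Property~(iv) is stated for \emph{straight segments}, not arbitrary $W^{1,1}$ curves. I would handle this by approximating the curve $u(\cdot)$ by its piecewise-affine interpolant $u_h$ on a mesh of width $h$ (on the interval of the slice): $u_h\to u$ uniformly and $\int \abs{u_h'} \leq \int \abs{u'}$. On each mesh subinterval, $u_h$ parametrises a straight segment $L_{h,k}$; if $L_{h,k}$ happens to be injective and to avoid $\X + y$, then Property~(iv) gives $\int_{L_{h,k}} \abs{(\RR \circ u_h)'} \leq C_1$. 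The danger is subintervals where the segment meets $\X + y$ (so $\RR$ is undefined along it) or is too short to "deserve" the uniform bound~$C_1$. This is exactly where the average over $y\in B^m_\sigma$ is used: since $\X$ has dimension $\leq m-2$, for each fixed segment $L$ the set of $y$ for which $L + (-y)$ meets $\X$ — equivalently $L$ meets $\X + y$ — has Lebesgue measure zero in $\R^m$ (a positive-codimension condition); moreover, one can quantify the measure of $y$'s for which $\dist(L, \, \X + y)$ is small, to get an $L^1$-in-$y$ bound on $\sup_L \abs{\nabla \RR_y}$-type quantities using Property~(iii). Concretely, I expect to show
\[
 \int_{B^m_\sigma} \abs{(\RR_y \circ u_h)'(t)}\, \d t\text{-type terms}
 \ \leq\ C_\Lambda \int \abs{u_h'(t)}\, \d t,
\]
by splitting each subinterval's contribution into (a) a "safe" part where Property~(iv) applies after translating by $-y$, contributing $C_1$ per subinterval but with total length of relevant subintervals controlled by $\int\abs{u'}$, and (b) a "bad" part, integrable in $y$ by the codimension-$2$ estimate. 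A cleaner route, which I would prefer, is to bypass the mesh entirely: apply Property~(iv) directly to the segment parametrised on each maximal subinterval where $u_h$ is affine and avoid the small-set-of-bad-$y$ issue by the Fubini argument, noting that "$u_h$ is a straight segment" is automatic on each piece.

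\textbf{Passing from $u_h$ back to $u$, and upgrading to Sobolev regularity.}
Having the bound for $u_h$ uniformly in $h$, I would let $h\to 0$. Since $u_h\to u$ uniformly and the bounds on $\int\abs{(\RR_y\circ u_h)'}$ are uniform, for a.e.\ $y$ (those for which $u(\cdot)$ stays away from $\X + y$, a full-measure set of $y$ by the codimension argument applied to the curve $u(\Omega_{\hat x_i})$ which has $\sigma$-finite $\H^1$ measure) the maps $\RR_y\circ u_h$ converge to $\RR_y\circ u$ with uniformly bounded one-dimensional variation, whence $\RR_y\circ u$ is absolutely continuous on the slice with $\int\abs{(\RR_y\circ u)'} \leq \liminf \int\abs{(\RR_y\circ u_h)'}$. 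Doing this for every $i$ and integrating in the transverse variables $\hat x_i$, then summing over $i$, gives $\RR_y\circ u\in W^{1,1}(\Omega, \, \R^m)$ with the stated integral estimate; the pointwise constraint $\RR_y\circ u\in\NN$ a.e.\ is immediate since $\RR_y$ maps into $\NN$ by construction~\eqref{RR_y}. The measurability of $(y,x)\mapsto\RR_y\circ u(x)$ and the applicability of Fubini need a remark but are routine given that $\RR$ is Borel. The main obstacle is the interplay, just described, between the \emph{pointwise-in-$y$} statement of Property~(iv) (valid only off the measure-zero set where the line hits $\X + y$) and the need for an $L^1(B^m_\sigma)$ bound: the whole point of translating the retraction by $y$ and then averaging is to convert the "bad directions/positions" — unavoidable for a single retraction $\RR$ — into a null set that is harmless after integration, and making this quantitative (controlling $\int_{B^m_\sigma} \dist^{-1}(\,\cdot\,, \X + y)\,\d y$-type integrals using that $\codim\X\geq 2$) is the technical heart.
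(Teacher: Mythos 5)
Your approach is genuinely different from the one the paper invokes, and it has a gap that I don't think can be repaired in the form you propose. The paper does not prove Lemma~\ref{lemma:projection} itself; it cites the classical Hardt--Kinderlehrer--Lin averaging argument (and \cite[Lemma~14]{CO1}), which uses Property~(iii), \emph{not} Property~(iv). The correct and much shorter route is: for a.e.\ $x$ and $y$ with $u(x)-y\notin\X$, the chain rule and Property~(iii) give $\abs{\nabla(\RR_y\circ u)(x)}\leq C\,\dist^{-1}(u(x)-y,\,\X)\,\abs{\nabla u(x)}$; then, because $\X$ is a finite union of polyhedra of codimension at least~$2$, the kernel $y\mapsto\dist^{-1}(u(x)-y,\X)$ is integrable on $B^m_\sigma$ with a bound independent of $u(x)$, so integrating in $y$ and using Fubini gives the stated estimate pointwise in $x$ before ever integrating over $\Omega$. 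No slicing, no piecewise-affine approximation, and no invocation of Property~(iv) is needed; the codimension-$2$ condition enters exactly once, to make the singular integral in $y$ converge.

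The flaw in your proposal is at the core step, where you apply Property~(iv) to each affine piece of the interpolant $u_h$ on a slice. Property~(iv) gives the \emph{length-independent} bound $\int\abs{(\RR\circ\gamma)'}\leq C_1$ for any straight segment $\gamma$ avoiding $\X$: even a segment of length $\eps$ gets the same bound $C_1$, because a short segment passing near $\X$ can indeed be stretched by $\RR$ to a curve of length of order~$C_1$. If the mesh on a slice of length~$O(1)$ has width $h$, you have $O(1/h)$ subintervals, and summing $C_1$ over them gives $O(C_1/h)\to\infty$ as $h\to 0$. Your attempted fix --- ``contributing $C_1$ per subinterval but with total length of relevant subintervals controlled by $\int\abs{u'}$'' --- does not hold: the number of subintervals is dictated by $h$, not by the arclength of $u$. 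For short segments that stay away from $\X+y$ one should instead use the \emph{pointwise} Lipschitz bound of Property~(iii), which scales linearly with the segment length; but once you switch to (iii), the slicing and interpolation are superfluous and you recover exactly the HKL argument. (Property~(iv) is the paper's genuinely new ingredient, but it is used only in the proof of Proposition~\ref{prop:affine}, to bound the \emph{jump} of the lifting across $T_y$, where a uniform, length-independent bound is precisely what is wanted.)
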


This well-know result is based on an argument by
Hardt, Kinderlehrer and Lin \cite[Lem\-ma~2.3]{HKL},
\cite[Theorem~6.2]{HardtLin-Minimizing} 
(a proof of this statement may also be found, e.g.,
in~\cite[Lemma~14]{CO1}).

\subsection{Automorphisms of~$\pi$}
\label{sect:aut}

Let~$\Aut(\pi)$ be the set of smooth isometries~$\varphi\colon\EE\to\EE$ 
such that~$\pi\circ\varphi = \pi$. The set~$\Aut(\pi)$
is a group with respect to the composition of maps. In fact,
$\Aut(\pi)$ is isomorphic to~$\pi_1(\NN, \, z_0)$ for any~$z_0\in\NN$,
although the isomorphism may not be canonical
(it is canonical if and only if~$\pi_1(\NN, \, z_0)$ is abelian).
For any $w_1, \, w_2\in\EE$ such that $\pi(w_1) = \pi(w_2)$,
there exists a unique $\varphi\in\Aut(\pi)$ such that 
\begin{equation} \label{trans}
 \varphi(w_1) = w_2
\end{equation}
(see e.g.~\cite[Section~1.3, p.~70]{Hatcher}
or~\cite[Chapter~12]{Lee-Topological}).

\begin{lemma} \label{lemma:cover}
 There exists a compact subset~$E_*\subseteq\EE$ such that 
 \[
  \EE \subseteq \bigcup_{\varphi\in\Aut(\pi)} \varphi(E_*).
 \]
\end{lemma}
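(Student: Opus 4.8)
The plan is to exploit the fact that $\pi\colon\EE\to\NN$ is the universal covering of a \emph{compact} manifold, so that $\NN$ itself serves as a ``fundamental domain up to overlaps'' once we cover it by finitely many evenly-covered open sets. First I would use compactness of $\NN$ to choose a finite open cover $\NN = \bigcup_{i=1}^{k} U_i$ such that each $U_i$ is evenly covered by $\pi$; that is, $\pi^{-1}(U_i) = \bigsqcup_{\alpha} V_{i,\alpha}$, a disjoint union of open sets each mapped homeomorphically onto $U_i$ by $\pi$. Shrinking slightly, I may also assume each $\overline{U_i}$ is compact and evenly covered, so that $\pi^{-1}(\overline{U_i})$ is a disjoint union of compact sets $\overline{V_{i,\alpha}}$, each homeomorphic (indeed isometric, since $\pi$ is a local isometry) to $\overline{U_i}$.

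Next I would select, for each $i$, one particular sheet: fix a point $p_i \in \NN$ with $p_i \in U_i$, pick any $w_i \in \pi^{-1}(p_i)$, and let $\overline{V_i}$ denote the (unique) compact sheet over $\overline{U_i}$ containing $w_i$. Set
\[
 E_* := \bigcup_{i=1}^{k} \overline{V_i}.
\]
This is a finite union of compact sets, hence compact. The claim is that the $\Aut(\pi)$-translates of $E_*$ cover $\EE$. To see this, take any $w\in\EE$ and let $x := \pi(w)\in\NN$. Choose $i$ with $x\in U_i$. Then $w$ lies in exactly one sheet $\overline{V_{i,\alpha}}$ over $\overline{U_i}$; pick its basepoint-type representative, namely the point $w' \in \overline{V_{i,\alpha}}$ with $\pi(w') = p_i$. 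Since $\pi(w') = p_i = \pi(w_i)$, the deck transformation property~\eqref{trans} furnishes a unique $\varphi\in\Aut(\pi)$ with $\varphi(w_i) = w'$. Because $\varphi$ is a homeomorphism commuting with $\pi$, it must map the sheet $\overline{V_i}$ (the one over $\overline{U_i}$ containing $w_i$) onto the sheet over $\overline{U_i}$ containing $w'$, which is precisely $\overline{V_{i,\alpha}}$. Hence $w\in\overline{V_{i,\alpha}} = \varphi(\overline{V_i}) \subseteq \varphi(E_*)$, proving the inclusion.

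The one genuinely substantive point — the rest being bookkeeping about covering spaces — is the sheet-permutation step: that a deck transformation sending one lift of $p_i$ to another must carry the whole sheet over $\overline{U_i}$ to the whole sheet over $\overline{U_i}$. This follows because $\varphi$ restricts to a homeomorphism of $\pi^{-1}(\overline{U_i})$ onto itself (as $\pi\circ\varphi = \pi$), and a homeomorphism of a disjoint union of connected pieces permutes those pieces; the piece containing $w_i$ goes to the piece containing $\varphi(w_i) = w'$. The only mild care needed is to ensure $\overline{U_i}$ is connected and evenly covered, which is arranged by taking the $U_i$ to be, say, small geodesically convex balls. I expect no real obstacle here; the argument is standard once the finite evenly-covered cover is in place, and compactness of $\NN$ is exactly what makes the union in the definition of $E_*$ finite.
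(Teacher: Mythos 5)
Your argument is correct, and it takes a genuinely different route from the paper's. The paper's proof is \emph{metric}: it fixes a basepoint $w_1 \in \EE$, uses compactness of $\NN$ to bound uniformly the length of a minimizing geodesic from any $z\in\NN$ back to $\pi(w_1)$, lifts that geodesic starting at an arbitrary $w\in\EE$, and concludes (since $\pi$ is a local isometry) that $w$ lies within a fixed geodesic radius $R$ of some deck translate of $w_1$; the set $E_*$ is then a closed geodesic ball. Your proof is purely \emph{topological}: it uses compactness of $\NN$ to extract a finite cover by evenly covered connected sets, picks one compact sheet over each, and shows their union is a compact set whose deck translates cover $\EE$, via the standard fact that deck transformations permute the sheets (i.e., connected components of $\pi^{-1}(\overline{U_i})$). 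Both approaches are valid and both hinge on compactness of $\NN$, but yours never invokes the Riemannian structure and so would work verbatim for an arbitrary covering of a compact space, whereas the paper's argument leans on the pull-back metric that is already available and so is a bit shorter to write out. One small point worth being careful about in your version (which you do flag): you need each $\overline{U_i}$ to be connected and contained in an evenly covered open set, so that the compact sheets over $\overline{U_i}$ are precisely its connected components and hence are permuted by any deck transformation; taking the $U_i$ to be small geodesic balls and shrinking a finite subcover handles this.
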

\begin{proof}
 Fix base points~$z_1\in\NN$, $w_1\in\pi^{-1}(z_1)\subseteq\EE$.
 For any~$z\in\NN$, choose a minimising 
 geodesic~$\gamma_z\colon[0, \, 1]\to\NN$
 with endpoints~$\gamma_z(0) = z$,
 $\gamma_z(1) = z_1$. Since~$\NN$ is compact, we have
 \begin{equation} \label{cover1}
  R := \sup_{z\in\NN} \int_0^1
  \abs{\gamma_z^\prime(t)} \, \d t < +\infty.
 \end{equation}
 Let $E_*\subseteq\EE$ be the closed geodesic disk of centre~$w_1$
 and radius~$R$. Given~$w\in\EE$, we consider the
 (unique) Lipschitz map~$\tilde{\gamma}\colon [0, \, 1]\to\EE$ 
 such that~$\pi\circ\tilde{\gamma} = \gamma_{\pi(w)}$
 and~$\tilde{\gamma}(0) = w$. We
 have~$\pi(\tilde{\gamma}(1)) = z_1 = \pi(w_1)$
 and hence, due to~\eqref{trans}, there exists a (unique) $\varphi\in\Aut(\pi)$
 such that $\tilde{\gamma}(1) = \varphi(w_1)$. 
 Since~$\pi$ is a local isometry, the geodesic distance 
 between~$\tilde{\gamma}(0) = w$ and~$\tilde{\gamma}(1) = \varphi(w_1)$
 must be less than or equal to~$R$, by~\eqref{cover1}.
 Since~$\varphi$ is an isometry, $w\in\varphi(E_*)$,
 and the lemma follows.
\end{proof}

\section{The case of piecewise-affine maps}
\label{sect:affine}

Towards the proof of Theorem~\ref{th:main}, 
we first construct a lifting for maps of the form~$\RR_y\circ u$,
where~$u$ is piecewise-affine (but not necessarily $\NN$-valued).
For any~$d\geq 1$, let~$Q^d := (-1, \, 1)^d$.
We say that a map~$u\colon Q^d\to\R^m$
is \emph{piecewise-affine} if~$u$ is continuous and 
there exists a triangulation~$\mathcal{T}$ of~$Q^d$
such that, for any simplex~$T$ of~$\mathcal{T}$,
$u_{|T}$ is affine.

\begin{prop} \label{prop:affine}
 Let~$u\colon Q^d\to Q^m_\Lambda$ be
 piecewise-affine. Then, there exist~$y\in B^m_\sigma$ and 
 a lifting~$v\in\BV(Q^d, \, \EE)$
 of~$\RR_y\circ u$ 
 such that
 \[
  \norm{v}_{L^1(Q^d)} \leq 
   C_\Lambda\left(\norm{\nabla u}_{L^1(Q^d)} + 1\right) \! , \qquad 
  \abs{\D v} \! (Q^d) \leq 
   C_\Lambda\norm{\nabla u}_{L^1(Q^d)} \! ,
 \]
 where~$C_\Lambda$ is a constant that depends only 
 on~$d$, $\NN$, $m$, $\X$, $\RR$, $\sigma$ and~$\Lambda$. 
\end{prop}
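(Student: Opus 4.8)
The plan is to reduce the statement to a combinatorial/topological construction on the triangulation associated with~$u$. First I would fix the triangulation~$\mathcal{T}$ of~$Q^d$ on which~$u$ is affine and, by Sard-type / Fubini arguments (applied to the finitely many affine pieces of~$u$), choose~$y\in B^m_\sigma$ such that the singular set~$\X+y$ is ``transverse'' to every affine piece of~$u$: concretely, $u^{-1}(\X+y)$ should have dimension at most~$d-2$, and the map~$\RR_y\circ u$ should restrict to a Lipschitz (even piecewise-smooth) map~$\NN$-valued on each simplex of a suitable refinement, with the bound on~$\int_{B^m_\sigma}\norm{\nabla(\RR_y\circ u)}_{L^1}\,\d y$ coming from Lemma~\ref{lemma:projection}. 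This way, for a good choice of~$y$, the map~$w := \RR_y\circ u$ is continuous on~$Q^d$ away from a closed polyhedral set~$S$ of dimension~$\leq d-2$, it is Lipschitz on the complement of a neighbourhood of~$S$, and~$\norm{\nabla w}_{L^1(Q^d)}\leq C_\Lambda\norm{\nabla u}_{L^1(Q^d)}$.

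The second step is the lifting itself. Since~$w$ is continuous on~$Q^d\setminus S$ with $S$ of codimension~$2$, and $\EE\to\NN$ is a covering, I would lift~$w$ locally: cover $Q^d\setminus S$ by small simply-connected open sets on which $w$ lifts, and glue. The obstruction to a global lift lives in $\pi_1(Q^d\setminus S)$, equivalently (for a generic codimension-2 polyhedral $S$) in the free group generated by small loops around the $(d-2)$-faces of~$S$, and it is measured by the conjugacy classes in~$\pi_1(\NN)$ of~$w$ restricted to those loops. Here I cannot simply ``cancel'' this obstruction because~$\pi_1(\NN)$ is non-abelian (this is exactly why the flat-chains approach of~\cite{CO1} is avoided). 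Instead, the idea is to work on the dual skeleton: build~$v$ inductively over the skeleta of a cellulation dual to $\mathcal{T}$, extending the lift cell by cell, and \emph{allowing $v$ to jump} across the $(d-1)$-skeleton. More precisely, on each top-dimensional dual cell I pick one local lift of~$w$; on the shared $(d-1)$-faces the two chosen lifts differ by an element $\varphi\in\Aut(\pi)$, via~\eqref{trans}, producing a jump of~$v$ there; the total variation contributed by such a jump across a face~$F$ is controlled by $\H^{d-1}(F)$ times the geodesic distance in $\EE$ between the two lifts, and that distance is in turn controlled (using Lemma~\ref{lemma:cover} and Property~(iv) of Proposition~\ref{prop:X} applied to segments crossing~$F$) by the length of $\RR_y\circ u$ along transversal segments, hence by the local Dirichlet energy of~$u$. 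This is the heart of the construction and also, I expect, the main obstacle: making the inductive choice of lifts coherent around each $(d-2)$-face of $S$ (where $w$ genuinely has a nontrivial monodromy) so that the jump set of $v$ stays $(d-1)$-rectifiable with finite measure, and quantitatively bounding the jump part of $\D v$ by $C_\Lambda\norm{\nabla u}_{L^1}$ rather than by something depending on the number of simplices.

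Granting that construction, the remaining steps are bookkeeping. The absolutely continuous part of $\D v$ equals (locally, in the chart given by $\pi$) the derivative of a local lift, hence $|\D^a v| = |\nabla w| = |\nabla(\RR_y\circ u)|$ a.e., which is bounded by $C_\Lambda\norm{\nabla u}_{L^1}$ via Lemma~\ref{lemma:projection}; $v$ has no Cantor part because it is piecewise-Lipschitz with a polyhedral jump set, so $\D v = \D^a v + \D^j v$ and $|\D v|(Q^d)\leq C_\Lambda\norm{\nabla u}_{L^1(Q^d)}$. For the $L^1$-bound on $v$ itself, I would use Lemma~\ref{lemma:cover}: after composing with a suitable $\varphi\in\Aut(\pi)$ on one fixed top cell we may assume $v$ takes values in $E_*$ there; then a Poincar\'e-type inequality on $Q^d$ for the $\EE$-valued $\BV$ map (or equivalently, integrating the total variation along paths from that cell) gives $\norm{v}_{L^1(Q^d)}\leq C(\abs{\D v}(Q^d) + \H^d(Q^d)\,\mathrm{diam}(E_*))\leq C_\Lambda(\norm{\nabla u}_{L^1(Q^d)} + 1)$. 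Finally $\pi\circ v = \RR_y\circ u$ a.e.\ holds by construction since each local piece of $v$ was chosen to be a genuine lift of $w$, completing the proof of the proposition.
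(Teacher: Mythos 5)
Your reduction to a good choice of~$y$ (via transversality, Lemma~\ref{lemma:projection}, and a control of $\nabla(\RR_y\circ u)$) matches the paper, but the lifting construction you propose has a genuine gap, which you yourself flag and which I do not see how to close along the lines you suggest. Picking one local lift of $w=\RR_y\circ u$ on each top-dimensional dual cell and allowing jumps across the $(d-1)$-skeleton runs into two linked problems. First, the two lifts on either side of a shared face differ by a deck transformation $\varphi\in\Aut(\pi)$, and the pointwise quantity $\dist_\EE(v^+,v^-)=\dist_\EE(v^+,\varphi(v^+))$ is \emph{not} bounded in general when $\EE$ is non-compact; Property~(iv) of Proposition~\ref{prop:X} does not control it, because the distance between a point and its image under a nontrivial $\varphi$ has nothing to do with the length of $\RR_y\circ u$ along any segment crossing the face. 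Second, even if the jump magnitude were uniformly bounded, the jump set in your construction is the full $(d-1)$-skeleton of the dual cellulation, whose $\H^{d-1}$-measure scales with the number of simplices of $\mathcal{T}$ and is not controlled by $\norm{\nabla u}_{L^1}$. Your Poincar\'e and $L^1$-bound step (using Lemma~\ref{lemma:cover}) is fine and coincides with the paper's Step~3, but it presupposes the $|\D v|$-bound that the construction fails to deliver.

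The paper's mechanism, which your proposal is missing, is the affine interpolation $U(t,x)=(1-t)u(x)+tu_*$ with $u_*\in\NN$ a constant, together with the ``shadow'' sets $T_y=\tau\bigl((U-y)^{-1}(\X)\bigr)$ and $\Sigma_y$ (the set of $(t,x)$ lying \emph{below} $(U-y)^{-1}(\X)$ in the $t$-direction). This extra dimension is what does all the work: the complement $([0,1]\times Q^d)\setminus\Sigma_y$ deformation-retracts onto $\{1\}\times Q^d$, where $\RR_y\circ U$ is constant, so the lifting obstruction vanishes and one gets a single continuous lift $V$ there; restricting to $t=0$ gives $v$ whose jump set is exactly $T_y$, not a dual skeleton. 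The measure $\H^{d-1}(T_y)$ is then controlled by $\norm{\nabla u}_{L^1}$ through the coarea-type estimate of Lemma~\ref{lemma:inverseimage}. And the jump magnitude across $T_y$ is bounded by a \emph{uniform} constant $C_\Lambda$, not by a local Dirichlet energy: one connects $v^+(x)$ to $v^-(x)$ by lifting the loop that goes from $x$ to $x_+$, then up to $\{1\}\times Q^d$ (where $V$ is constant), then down to $x_-$, then back to $x$, and Property~(iv) bounds the length of the projected loop by a constant. Without the interpolation $U$ and the set $\Sigma_y$ you have no analogous way to unwind the monodromy of $w$ around the $(d-2)$-faces of $S_y$ while keeping both the jump set and the jump size under control.
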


Before proving Proposition~\ref{prop:affine},
we state some auxiliary results. 
Let~$u\colon Q^d\to Q^m_\Lambda$
be piecewise-affine. 
Let us take a constant~$u_*\in\NN$,
and define $U\colon [0, \, 1]\times Q^d\to Q^m_\Lambda$ by
\begin{equation} \label{affineint}
 U(t, \, x) := (1-t) u(x) + t u_* \qquad 
 \textrm{for } (t, \, x)\in [0, \, 1]\times Q^d.
\end{equation}
Let~$\tau\colon[0, \, 1]\times\R^d\to\R^d$ denote the canonical projection, 
$\tau(t, \, x) := x$. For any~$y\in B^m_\sigma$, we define
\begin{equation} \label{SyTy}
 S_y := (u - y)^{-1}(\X),  \qquad 
 T_y := \tau\left((U - y)^{-1}(\X)\right) \! .
\end{equation}

\begin{lemma} \label{lemma:inverseimage}
 Suppose that~$d\geq 2$. Then,
 for a.e.~$y\in B^m_\sigma$, the sets~$S_y$, $T_y$
 are finite unions of polyhedra, of dimension
 less than or equal to $d-2$, $d-1$ respectively. 
 Moreover,
 \[
  \int_{B^m_\sigma} \H^{d-1}\left(T_y\right) \d y 
  \leq C\int_{Q^d} \abs{u - u_*} \abs{\nabla u} \d\L^d
 \]
 where~$C$ is a positive constant that 
 depends only on~$\NN$, $m$, $\X$, $\RR$.
\end{lemma}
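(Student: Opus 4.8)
The plan is to prove Lemma~\ref{lemma:inverseimage} by combining a Sard-type / coarea argument with the structure of~$\X$ as a finite union of~$(m-2)$-dimensional polyhedra. Recall that $\X$ is contained in $R'_{m-2}$ (up to the set~$W$ around~$\NN$, which plays no role since~$\RR$ is smooth there), so it suffices to estimate the inverse images of each of the finitely many $(m-2)$-polyhedra~$P$ making up~$\X$. For such a~$P$, let $H_P\subseteq\R^m$ be the affine $(m-2)$-plane spanned by~$P$, and let $\Pi_P\colon\R^m\to H_P^\perp\cong\R^2$ be orthogonal projection onto the $2$-dimensional orthogonal complement. On each simplex~$T$ of the triangulation~$\mathcal{T}$ on which~$u$ is affine, $u-y$ is an affine map, so $(u-y)^{-1}(H_P)$ is an affine subspace of~$T$ of codimension~$\leq 2$ for every~$y$, hence $(u-y)^{-1}(P)$ is a polyhedron of dimension~$\leq d-2$; taking the union over the finitely many simplices and the finitely many polyhedra~$P$ gives the claimed structure of~$S_y$. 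The same reasoning applied to the piecewise-affine map~$U$ on $[0,1]\times Q^d$ (triangulated compatibly with~$\mathcal{T}$) shows $(U-y)^{-1}(\X)$ is a finite union of polyhedra of dimension~$\leq (d+1)-2 = d-1$, and projecting by~$\tau$ can only decrease dimension, so $T_y$ has dimension~$\leq d-1$.

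For the quantitative estimate, I would argue simplex by simplex on the triangulation of $[0,1]\times Q^d$ adapted to~$U$. Fix such a simplex~$\Sigma$ on which $U$ is affine with (constant) gradient; write $A := \nabla U|_\Sigma$, viewed as a linear map $\R^{d+1}\to\R^m$. For a fixed $(m-2)$-polyhedron $P\subseteq\X$ with affine hull $H_P$, the set $(U-y)^{-1}(H_P)\cap\Sigma$ is, for a.e.~$y$, an affine slice of codimension equal to $\operatorname{rank}(\Pi_P A)\in\{0,1,2\}$; the interesting case is rank~$2$, where the slice has dimension~$d-1$. By the coarea formula applied to the affine map $\Pi_P\circ(U-y) = \Pi_P U - \Pi_P y$ restricted to~$\Sigma$, integrating over $y\in B^m_\sigma$ and using that $\Pi_P y$ ranges over a bounded subset of~$\R^2$ with bounded density, one gets
\[
 \int_{B^m_\sigma} \H^{d-1}\!\left((U-y)^{-1}(P)\cap\Sigma\right) \d y
 \;\leq\; C\,\H^{d+1}(\Sigma)\cdot \frac{|P|_{m-2}}{J_2(\Pi_P A|_{(\ker \Pi_P A)^\perp})},
\]
where $J_2$ is the $2$-dimensional Jacobian of $\Pi_P A$ on the orthogonal complement of its kernel, and $|P|_{m-2}=\H^{m-2}(P)$. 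The key point is that on the slice, $\tau$ restricts to a Lipschitz map with Lipschitz constant~$1$, so $\H^{d-1}(\tau(\text{slice}))\leq\H^{d-1}(\text{slice})$, giving the same bound for~$T_y\cap\tau(\Sigma)$. Summing over the finitely many~$P$ and the finitely many~$\Sigma$, and bounding the finitely many geometric quantities $|P|_{m-2}$ by a constant depending only on~$\NN,m,\X,\RR$, reduces everything to estimating $\sum_\Sigma \H^{d+1}(\Sigma)\, / \, J_2(\Pi_P A|_\Sigma)$ in terms of $\int_{Q^d}|u-u_*|\,|\nabla u|\,\d\L^d$.

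The last reduction is where the factor $|u-u_*|$ enters, and this is the step I expect to be the main obstacle — or at least the one requiring the most care. The map~$U$ of~\eqref{affineint} is a cone-type homotopy: $U(t,x) = (1-t)u(x)+tu_*$, so $\nabla_x U = (1-t)\nabla u(x)$ and $\partial_t U = u_*-u(x)$. Thus on a simplex~$\Sigma$ lying over a simplex~$T$ of~$\mathcal T$ (so $u|_T$ is affine with gradient~$b := \nabla u|_T$), the gradient $A = \nabla U|_\Sigma$ is, in suitable coordinates, the $m\times(d+1)$ matrix with $x$-block $(1-t)b$ and $t$-column $u_*-u(x)$. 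The two-dimensional Jacobian $J_2(\Pi_P A)$ is therefore comparable (after summing/integrating over the $t$-variable, which ranges over $[0,1]$) to something like $(1-t)^{d-1}\,|\text{(rank-2 minors of }\Pi_P[\,b \mid u_*-u(x)\,])|$, and integrating $\H^{d+1}(\Sigma)$ against the reciprocal — equivalently, writing the coarea estimate directly on $[0,1]\times T$ — produces, after the $t$-integration, a factor $|u(x)-u_*|$ times $|\nabla u(x)|$ integrated over $T$. Concretely, I would parametrise the level set of $\Pi_P\circ(U-y)$ by slicing first in~$t$: for fixed $y$ and fixed value, the set is a graph over an affine subset of~$T$ of dimension~$d-1$ (when the rank is~$2$) or lower, and the coarea/area computation on the homotopy $U$ yields, after integrating in~$y$ over $B^m_\sigma$ and in~$t$ over $[0,1]$, exactly the right-hand side $C\int_{Q^d}|u-u_*|\,|\nabla u|\,\d\L^d$ — the $|u-u_*|$ being the ``height'' of the cone and $|\nabla u|$ the ``spread'' of its base. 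Care is needed to handle the degenerate ranks (rank~$0$ or~$1$, where the slice is either empty for a.e.~$y$ or has dimension $<d-1$ and contributes~$0$ to $\H^{d-1}$), to check the a.e.-in-$y$ statements via Sard's theorem / Fubini applied to the finitely many affine pieces, and to absorb all the $\NN$-, $m$-, $\X$-, $\RR$-dependent geometric constants (diameters and measures of the finitely many polyhedra, angles between the planes~$H_P$ and the coordinate directions) into the single constant~$C$.
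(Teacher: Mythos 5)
The core intuition is right --- the cone structure of the homotopy $U(t,x) = (1-t)u(x) + tu_*$ is what produces the factor $\abs{u - u_*}\abs{\nabla u}$, and you even compute $\nabla_x U = (1-t)\nabla u$ and $\partial_t U = u_* - u$ correctly near the end --- but the framework you set up to exploit it is flawed in a way that cannot be patched without reworking the argument.

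The central error is the claim that $U$ is piecewise affine on a triangulation of $[0,1]\times Q^d$ ``compatible with~$\mathcal T$''. It is not. On a simplex~$T$ of~$\mathcal T$ where $u(x) = bx + c$, one has $U(t,x) = bx + c + t(u_* - bx - c)$, which contains the bilinear term $-tbx$: $U$ is \emph{quadratic} on $[0,1]\times T$, not affine, unless $u$ is constant on~$T$. Two of your steps collapse as a result. First, $(U-y)^{-1}(\X)$ is \emph{not} a finite union of polyhedra in general --- the preimage of an affine plane under a quadratic map is a quadric, not a hyperplane (already for $m=2$, $d=1$, $u(x)=x$, $u_*=0$, $K=\{0\}$, the set $\{(t,x):(1-t)x = y\}$ is a hyperbola for $y\neq 0$). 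So the argument ``$(U-y)^{-1}(\X)$ is polyhedral of dimension $\leq d-1$, and $\tau$ can only decrease dimension'' does not establish the polyhedral structure of~$T_y$. The paper proves that $T_y$ is polyhedral by an entirely different route: normalising $u_*=0$, it writes $T_y\cap H = \{x\in H: u(x)\in\tilde K_{i,y}\}$ where $\tilde K_{i,y} = \bigcup_{t\in[0,1-\eps]}(K_i+y)/(1-t)$ is itself a polyhedron (the convex hull of $(K_i+y)\cup(K_i+y)/\eps$), so $T_y$ is the preimage of a polyhedron under the honestly affine map $u|_H$. Second, the displayed coarea estimate treats $A := \nabla U|_\Sigma$ as a constant matrix with a constant Jacobian $J_2(\Pi_P A)$; since $\nabla U$ varies over~$\Sigma$, the expression does not make sense as written, and ``summing/integrating over the $t$-variable'' afterwards is precisely the computation that needs to be done carefully from the start.

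The paper handles this cleanly in Lemma~\ref{lemma:coarea}: it applies the coarea formula directly to the non-affine map $V(t,x) = (1-t)v(x) + tv_*$, and the cross-product identity $\abs{\tau(\nabla V^1\times\nabla V^2)} = \abs{(\partial_t V^2)\nabla_x V^1 - (\partial_t V^1)\nabla_x V^2}\leq 2\abs{v-v_*}\abs{\nabla_x v}$ is what replaces your constant-Jacobian heuristic and makes the $\abs{u-u_*}\abs{\nabla u}$ factor emerge rigorously. Your discussion of the a.e.-in-$y$ statements via Sard/Fubini is fine (the paper invokes Thom's parametric transversality theorem, which is the same idea), and the reduction to the finitely many $(m-2)$-faces of~$\X$ matches the paper. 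But as a proof, the middle is broken: you would need to replace ``triangulate $[0,1]\times Q^d$ and argue on simplices where $\nabla U$ is constant'' by a direct coarea argument on $[0,1]\times T$ of the kind Lemma~\ref{lemma:coarea} carries out.
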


We will prove Lemma~\ref{lemma:inverseimage} with the help of the following 

\begin{lemma}\label{lemma:coarea}
 Let~$\Omega\subseteq\R^d$ be a bounded domain, 
 with~$d\geq 2$.
 Let~$v\colon\R^d\to\R^2$ be an affine map,
 let~$v_*\in\R^2$ be a constant, and let
 $V\colon[0, \, 1]\times\R^d\to\R^2$ be defined by 
 $V(t, \, x) := (1-t)v(x) + tv_*$. Then, 
 \[
  \int_{\R^2} \H^{d-1}\left(\tau(V^{-1}(z))\cap\Omega\right) \d z 
  \leq 2 \int_{\Omega} \abs{v - v_*}\abs{\nabla v} \d\L^d .
 \]
\end{lemma}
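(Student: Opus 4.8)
The statement to prove is Lemma~\ref{lemma:coarea}, an integral-geometric estimate for the fibres of the affine homotopy $V(t,x) = (1-t)v(x) + tv_*$ between an affine map $v\colon\R^d\to\R^2$ and a constant $v_*\in\R^2$. The plan is to apply the coarea formula to the map $V\colon[0,1]\times\R^d\to\R^2$, compare the $(d-1)$-dimensional measure of the sliced fibre $\tau(V^{-1}(z))\cap\Omega$ with the $(d-1)$-dimensional measure of the full fibre $V^{-1}(z)\cap([0,1]\times\Omega)$, and finally bound the coarea Jacobian $\J V$ pointwise by $2\abs{v-v_*}\abs{\nabla v}$.

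First I would compute the coarea Jacobian of $V$. Since $v$ is affine, $\nabla v$ is a constant $2\times d$ matrix; writing $\nabla_{(t,x)} V = (v_* - v(x) \mid (1-t)\nabla v)$ as a $2\times(d+1)$ matrix, the coarea factor $\J V = \sqrt{\det(\nabla V\,(\nabla V)^{\!\T})}$ is, by the Cauchy--Binet formula, the square root of the sum of squares of the $2\times 2$ minors. Each such minor is either a minor of $(1-t)\nabla v$ (hence bounded by $(1-t)^2\abs{\nabla v}^2$ in absolute value, up to a dimensional constant absorbed by noting $\abs{\cdot}$ can be taken as the operator norm so the $2\times2$ minors of $(1-t)\nabla v$ are $\le (1-t)^2\abs{\nabla v}^2$ — more carefully, $\J(V(t,\cdot))\le \abs{\nabla v}^2$ after choosing norms appropriately) or involves the column $v_*-v(x)$, contributing a term bounded by $\abs{v_* - v(x)}\,\abs{(1-t)\nabla v}$. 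A clean way is: $\J V \le \abs{v_* - v(x)}\,\abs{(1-t)\nabla v} + (\text{something})$; since only the fibres matter and the "something" term coming from the pure $\nabla v$ minors corresponds to directions transverse to $\tau$, I want to extract specifically the part that controls the projected slice. Concretely, I expect the bound $\J V(t,x) \le 2\abs{v(x) - v_*}\,\abs{\nabla v}$ after discarding/controlling the non-degenerate piece, where the factor $(1-t)\le 1$ has been absorbed; the factor $2$ is the dimensional slack.

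Next I would relate $\H^{d-1}(\tau(V^{-1}(z))\cap\Omega)$ to the full fibre. For a.e.\ $z$ (by the coarea formula, the fibre $V^{-1}(z)$ is, for a.e.\ $z$, an $(d-1)$-dimensional $C^1$ — in fact affine, since $V$ is affine on each $t$-slice but $V$ itself is not affine; still it is real-analytic — so the fibre is a real-analytic variety of dimension $\le d-1$) rectifiable set in $[0,1]\times\R^d$. The projection $\tau$ is $1$-Lipschitz, so $\H^{d-1}(\tau(V^{-1}(z))\cap\Omega) \le \H^{d-1}(V^{-1}(z)\cap([0,1]\times\Omega))$. Then the coarea formula gives
\[
 \int_{\R^2}\H^{d-1}\!\left(V^{-1}(z)\cap([0,1]\times\Omega)\right)\d z
 = \int_{[0,1]\times\Omega}\J V(t,x)\,\d\L^{d+1}(t,x)
 \le \int_0^1\!\!\int_\Omega 2\abs{v(x)-v_*}\,\abs{\nabla v}\,\d\L^d(x)\,\d t,
\]
and the $t$-integral of the constant-in-$t$ right-hand side is exactly $2\int_\Omega\abs{v-v_*}\abs{\nabla v}\,\d\L^d$, which is the claim.

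\textbf{Main obstacle.} The delicate point is the Jacobian estimate $\J V \le 2\abs{v-v_*}\abs{\nabla v}$: a priori $\J V$ also contains $2\times2$ minors built purely from columns of $(1-t)\nabla v$, which are of size $\abs{\nabla v}^2$ and are \emph{not} small when $v$ is far from constant. The resolution is that these "bad" minors correspond to the tangent plane of the fibre being nearly vertical (i.e.\ transverse to the $\{t=\text{const}\}$ slices is not the issue — rather, they measure the $x$-directions in which $V$ already varies, which would force the fibre to be lower-dimensional there). I need to argue that on the set where such minors dominate, the fibre $V^{-1}(z)$ actually has dimension $< d-1$ for a.e.\ $z$ — equivalently, restrict attention to the region where $\mathrm{rank}\,\nabla v$ interacts with the homotopy direction in the right way — OR, more simply, to bound $\H^{d-1}(\tau(V^{-1}(z))\cap\Omega)$ I only need the minors of $\nabla V$ involving the $t$-derivative $v_*-v(x)$, because the projection $\tau$ collapses the $t$-direction: so I would use the refined coarea/area estimate
\[
 \H^{d-1}\!\left(\tau(V^{-1}(z))\cap\Omega\right)\le \int_{V^{-1}(z)\cap([0,1]\times\Omega)} \abs{\nabla\tau\big|_{T}}\,\d\H^{d-1},
\]
and on the fibre the relevant Jacobian factor is governed exactly by the minors containing $v_*-v(x)$. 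Making this bookkeeping precise — choosing the right minors and invoking the version of the coarea formula (Federer, \cite[Theorem~4.2.9 and Section~3.2]{Federer}) adapted to the composition $\tau\circ(\text{fibre inclusion})$ — is the technical heart; everything else is the elementary coarea computation sketched above.
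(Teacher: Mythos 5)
Your proposal isolates both the obstacle and the correct remedy, and the remedy you identify is exactly the one the paper uses. You rightly note that the naive route (project, then bound $\J V$) fails, because $\J V=\sqrt{\det(\nabla V(\nabla V)^{\T})}$ picks up $2\times2$ minors of $(1-t)\nabla v$ which are of size $\abs{\nabla v}^2$ and cannot be absorbed into $\abs{v-v_*}\abs{\nabla v}$. You then propose the fix: bound $\H^{d-1}\bigl(\tau(V^{-1}(z))\cap\Omega\bigr)$ by the integral over the fibre of the $(d-1)$-Jacobian of $\tau$ restricted to the tangent space, so that only minors of $\nabla V$ containing the column $v_*-v$ survive. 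This is precisely the paper's strategy.

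Where the proposal falls short is that you explicitly leave the crucial computation open (``Making this bookkeeping precise \dots is the technical heart''), and this is not a minor detail. The paper carries it out concretely, in two steps. First, for $d=2$ the fibre $V^{-1}(z)\subseteq\R^3$ is a curve with tangent $\nabla V^1\times\nabla V^2$, and the projected tangent factor is computed via the vector triple-product identity
\[
 \abs{\tau(\nabla V^1\times\nabla V^2)}
 =\abs{\mathbf{e}\times(\nabla V^1\times\nabla V^2)}
 =\abs{(\partial_t V^2)\nabla V^1-(\partial_t V^1)\nabla V^2}
 \le 2\,\abs{v-v_*}\,\abs{\nabla_x v},
\]
after observing that the $t$-component of the middle expression vanishes; combined with the area and coarea formulae, the fibre-normalisation $\abs{\nabla V^1\times\nabla V^2}$ cancels against $\J V$, giving the claim. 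Second, for $d\ge3$ the paper does \emph{not} redo this Jacobian analysis in higher dimension: it exploits that $v$ affine forces $\ker\nabla v$ to contain a $(d-2)$-plane $\Pi$, so $V^{-1}(z)=(V|_{\Pi^\perp})^{-1}(z)\times\Pi$, and the estimate follows from the $d=2$ case by Fubini. Your sketch, as stated, addresses neither the explicit minor computation nor the $d\ge3$ reduction, so while the high-level plan matches the paper, the proof is not actually closed.
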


Lemma~\ref{lemma:coarea} follows immediately from \cite[Lemma~15]{CO1}.
However, for the convenience of the reader, 
we recall the proof here.

\begin{proof}[Proof of Lemma~\ref{lemma:coarea}]
 We consider the case~$d=2$ first.
 We use~$\nabla$ to denote the gradient in~$\R\times\R^2$,
 with respect to the variables~$(t, \, x)$, and we 
 call~$V^1$, $V^2$ the components of~$V$, i.e. $V = (V^1, \, V^2)$.
 Let~$z\in\R^2$ be a regular value for~$V$.
 Then, $V^{-1}(z)$ is a smooth curve 
 in~$\R\times\R^2$ and the vector 
 field~$(\nabla V^1\times\nabla V^2)/|\nabla V^1\times\nabla V^2|$
 is tangent to~$V^{-1}(z)$. By the area formula,
 \begin{equation} \label{coarea1}
  \H^1\left(\tau(V^{-1}(z))\cap\Omega\right)
  \leq \int_{V^{-1}(z)\cap ([0, \, 1]\times\Omega)}
  \abs{\tau\left(\frac{\nabla V^1\times\nabla V^2}
  {\abs{\nabla V^1\times\nabla V^2}}\right)} \d\H^1
 \end{equation}
 Let~$\mathbf{e} := (1, \, 0, \, 0)\in\R\times\R^2$.
 By the properties of the cross product, we deduce
 \[
  \begin{split}
  \abs{\tau(\nabla V^1\times\nabla V^2)}
  = \abs{\mathbf{e}\times(\nabla V^1\times\nabla V^2)}
  = \abs{(\partial_t V^2)\nabla V^1 - 
  (\partial_t V^1)\nabla V^2}
  \end{split}
 \]
 The $t$-component of~$(\partial_t V^2)\nabla V^1 - 
 (\partial_t V^1)\nabla V^2$ vanishes, so
 \[
  \abs{\tau(\nabla V^1\times\nabla V^2)}
  = \abs{(\partial_t V^2)\nabla_x V^1 - 
  (\partial_t V^1)\nabla_x V^2} 
  \leq 2\abs{\partial_t V} \abs{\nabla_x V}
  \leq 2\abs{v - v_*} \abs{\nabla_x v}
 \]
 Injecting this inequality into~\eqref{coarea1}, we obtain
 \begin{equation*} 
  \H^1\left(\tau(V^{-1}(z))\cap\Omega\right)
  \leq 2 \int_{V^{-1}(z)\cap ([0, \, 1]\times\Omega)}
  \frac{\abs{v - v_*} \abs{\nabla_x v}}
  {\abs{\nabla V^1\times\nabla V^2}} \,\d\H^1.
 \end{equation*}
 We have~$\abs{\nabla V^1\times\nabla V^2}^2 
 = \det(\nabla V (\nabla V)^{\mathsf{T}})$,
 that is, $\abs{\nabla V^1\times\nabla V^2}$
 is the Jacobian of~$V$ (up to a sign). Then, 
 the coarea formula implies
 \begin{equation*} 
  \int_{\R^2} \H^1\left(\tau(V^{-1}(z))\cap\Omega\right) \d z
  \leq 2 \int_{[0, \, 1]\times\Omega}
  \abs{v - v_*} \abs{\nabla_x v} \,\d\L^{d+1},
 \end{equation*}
 which gives the desired estimate when~$d=2$.
 Now, suppose that~$d\geq 3$. Up to a translation,
 we may assume that~$v$ is a linear map. 
 Then, the kernel of~$v$
 contains a~$(d-2)$-linear subspace~$\Pi$,
 and $V^{-1}(z) = (V|_{\Pi^\perp})^{-1}(z)\times\Pi$
 where~$\Pi^\perp$ is the orthogonal complement of~$\Pi$. 
 Therefore, the lemma follows by a slicing argument.
\end{proof}

\begin{proof}[Proof of Lemma~\ref{lemma:inverseimage}]
 By assumption, there exists a triangulation~$\mathcal{T}$ 
 of~$Q^d$ such that, for any simplex~$H$
 of~$\mathcal{T}$ (of arbitrary dimension),
 the restriction $u_{|H}$ is affine. 
 By Proposition~\ref{prop:X}, the set~$\X$ is a finite union of polyhedra,
 say~$K_1$, \ldots, $K_p$, of dimension~$m-2$ at most.
 For any~$y\in B^m_\sigma$, any simplex~$H$ of~$\mathcal{T}$ 
 and any~$i\in\{1, \, \ldots, \, p\}$, 
 $(u-y)^{-1}(K_i)\cap\times H$
 is a polyhedron, hence~$S_y$ is a union of polyhedra.
 To show that~$T_y$ is a union of polyhedra too,
 we assume that~$u_*= 0$, up to a translation.
 For any~$y\in\R^m$ such that~$-y\notin\X$,
 there exists~$\eps>0$ such that
 \[
    T_y\cap H = \bigcup_{i=1}^p 
    \left\{x\in H\colon u(x)\in\tilde{K}_{i,y}\right\}
    \qquad \textrm{where } \
    \tilde{K}_{i,y} := \bigcup_{t \in [0, \, 1-\eps]}\frac{K_i + y}{1 - t}
 \]
 The set~$\tilde{K}_{i,y}$ itself is a polyhedron
 (it is the convex hull of~$(K_i+y) \cup (K_i+y)/\eps$),
 so~$T_i$ is a finite union of polyhedra.
 
 Let~$\Pi_i$ be the affine subspace of~$\R^m$ spanned by~$K_i$.
 For any simplex~$H$ of~$\mathcal{T}$,
 any~$i$ and a.e.~$y\in B^m_\sigma$, the maps~$(u - y)_{|H}$,
 $(U - y)_{|[0, \, 1]\times H}$ are transverse to~$\Pi_i$.
 This follows by Thom's parametric transversality theorem 
 (see e.g.~\cite[Theorem~2.7 p.~79]{Hirsch}).
 By transversality, for any simplex~$H$ in $\mathcal{T}$, any~$i$
 and a.e.~$y\in B^m_\sigma$, we have
 \begin{equation} \label{inverseimage0}
  \dim\left((u-y)^{-1}(\Pi_i)\cap ([0, \, 1]\times H)\right) 
  = \dim(H) + 1 - m + \dim\Pi_i \leq d - 1
 \end{equation}
 (unless the intersection is empty),
 with equality only if~$\dim(H) = d$ and~$\dim(\Pi_i) = m-2$.
 Then, for a.e.~$y\in B^m_\sigma$, $T_y$ has dimension $d-1$ at most.
 In a similar way, we show that $\dim(S_y)\leq d-2$
 for a.e.~$y\in B^m_\sigma$. Moreover, from~\eqref{inverseimage0}
 we deduce
 \begin{equation} \label{inverseimage1}
  \begin{split}
   \H^{d-1}(T_y) \leq
   \sum_{\substack{i\colon\dim\Pi_i = m-2 \\ 
   H\in\mathcal{T}\colon \dim(T) = d}} \
   \H^{d-1}\left(\tau((U-y)^{-1}(\Pi_i))\cap \inter(H)\right) \! ,
  \end{split}
 \end{equation}
 where~$\inter(H)$ denotes the interior of~$H$.
 Now, take~$i$ such that~$\dim\Pi_i = m-2$
 and~$H\in\mathcal{T}$ of dimension~$d$. Let~$\Pi_i^\perp\subseteq\R^m$
 be the orthogonal $2$-plane to~$\Pi_i$, passing through the origin.
 Let~$\zeta$, $\zeta^\perp$ be the orthogonal projections of~$\R^m$
 onto~$\Pi_i$, $\Pi_i^\perp$, respectively. We denote the variable~$y \in\R^m$
 as~$(z, \, z^\perp)\in\Pi_i\times\Pi_i^\perp$. We have
 \begin{equation*}
  \begin{split}
   &\int_{B^m_\sigma} \H^{d-1}\left(\tau((U-y)^{-1}(\Pi_i))\cap\inter(H)\right) \d y \\
   &\qquad\qquad \leq \int_{\zeta(B^m_\sigma)\times\Pi^\perp_i} 
   \H^{d-1}\left(\tau((\zeta^\perp\circ U)^{-1}(z^\perp))\cap\inter(H)\right) 
   \d(z, \, z^\perp).
  \end{split}
 \end{equation*}
 Then, by applying Lemma~\ref{lemma:coarea}
 to the map~$\zeta^\perp\circ U$, we obtain
 \begin{equation*} 
  \begin{split}
   \int_{B^m_\sigma} \H^{d-1}\left(\tau((U-y)^{-1}(\Pi_i))\cap\inter(H)\right) \d y
   \leq 2\int_{H} \abs{\zeta^\perp\circ u - \zeta^\perp(u_*)}
   \abs{\nabla(\zeta^\perp\circ u)} \d\L^d.
  \end{split}
 \end{equation*}
 Using that $\zeta^\perp$ is $1$-Lipschitz,
 taking the sum over~$i$ and~$H$,
 and applying~\eqref{inverseimage1}, the lemma follows.
\end{proof}

\begin{proof}[Proof of Proposition~\ref{prop:affine}]
 Let us focus on the interesting case~$d\geq 2$;
 we will deal with the case~$d=1$ later.
 We take a constant~$u_*\in\NN$ and define~$U$, $S_y$, $T_y$
 as in~\eqref{affineint}, \eqref{SyTy}.
 By Lemma~\ref{lemma:projection}, Lemma~\ref{lemma:inverseimage}
 and an average argument, we can choose~$y\in B^m_\sigma$ such that
 $S_y$, $T_y$ are polyhedral, of dimension~$d-2$, $d-1$ respectively,
 and
 \begin{equation} \label{smooth1}
  \norm{\nabla (\RR_y\circ u)}_{L^1(Q^d)} + \H^{d-1}(T_y)
  \leq C_{\Lambda} \norm{\nabla u}_{L^1(Q^d)} \! .
 \end{equation}
 (The constant~$C_\Lambda$ depends only on~$d$, $\NN$, $m$,
 $\X$, $\RR$, $\Lambda$.)
 We also define
 \begin{equation} \label{Sigmay}
  \Sigma_y := \left\{(t, \, x)\in [0, \, 1]\times Q^d
  \colon \textrm{there exists } s\in [t, \, 1] \textrm{ such that }
  (s, \, x)\in (U - y)^{-1}(\X) \right\}
 \end{equation}
 (see Figure~\ref{fig:Sigmay}). The set~$\Sigma_y$ is closed. 
 We have~$(U - y)^{-1}(\X)\subseteq\Sigma_y$ and hence,
 the map~$\RR_y\circ U$ is well-defined and locally Lipschitz
 on~$([0, \, 1]\times Q^d)\setminus\Sigma_y$.

 \begin{figure}[ht]
	\begin{subfigure}{.65\textwidth}
		\centering
		\includegraphics[height=.3\textheight]{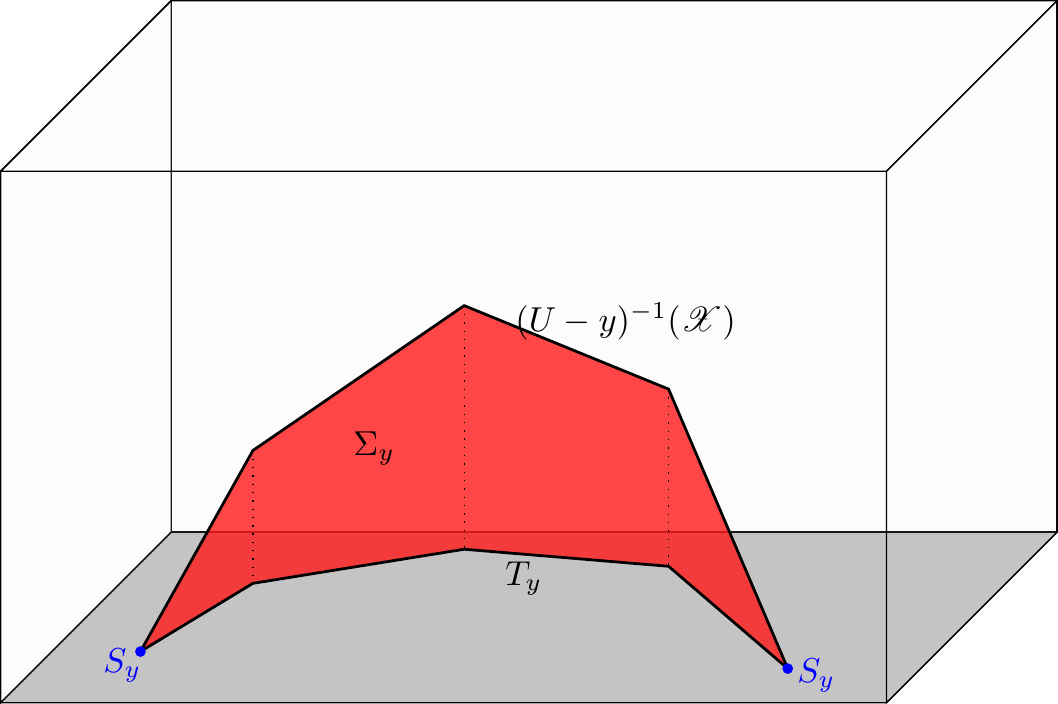}
	\end{subfigure} \hspace{.16cm}
	\begin{subfigure}{.33\textwidth}
		\centering
		\includegraphics[height=.34\textheight]{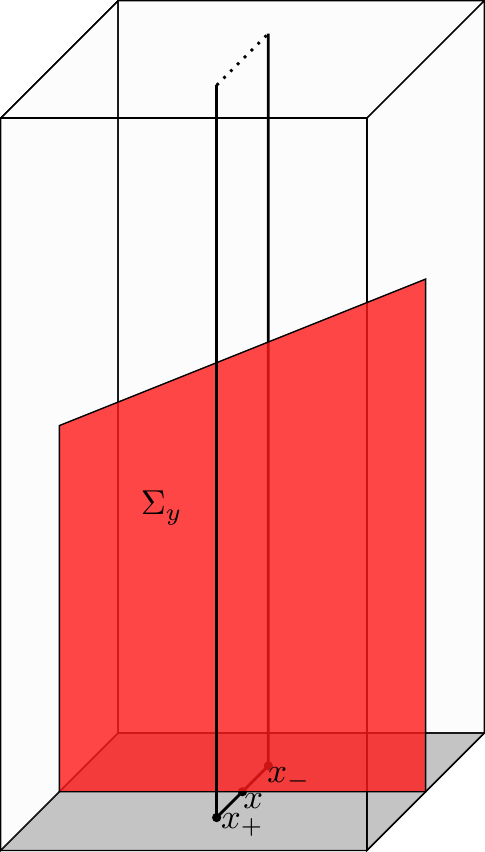}
	\end{subfigure}
	\caption{Left: The set~$\Sigma_y\subseteq([0, \, 1]\times Q^d)$ (in red),
	in case~$Q^d$ is a two-dimensional square (in gray). The complement
	$([0, \, 1]\times Q^d)\setminus\Sigma_y$ retracts by deformation
	onto~$\{1\}\times Q^d$. Right: The path we use in Step~2, 
	for the proof of~\eqref{smooth3}.}
	\label{fig:Sigmay}
 \end{figure}
 
 \setcounter{step}{0}
 \begin{step}[Construction of a lifting]
  We first construct a continuous 
  lifting~$V\colon ([0, \, 1]\times Q^d)\setminus\Sigma_y\to\EE$
  of~$\RR_y\circ U$ restricted 
  to~$([0, \, 1]\times Q^d)\setminus\Sigma_y$.
  A classical result in topology 
  (see e.g. \cite[Proposition~1.33]{Hatcher} 
  or~\cite[Theorem~11.18]{Lee-Topological}) 
  asserts that such a lifting exists
  if and only if, for any  continuous loop
  $\gamma\colon\SS^1\to([0, \, 1]\times Q^d)\setminus\Sigma_y$,
  the composition $\RR_y\circ U\circ\gamma\colon\SS^1\to\NN$
  is homotopic to a constant.
  (It does not matter whether we consider free or based homotopies here,
  because a loop that is freely homotopic to a constant is also homotopic 
  to a constant relative to its base point.)
  Let~$\gamma\colon\SS^1\to([0, \, 1]\times Q^d)\setminus\Sigma_y$
  be a continuous loop. Let~$\xi\colon [0, \, 1]\times\R^d\to [0, \, 1]$,
  $\tau\colon [0, \, 1]\times\R^d\to\R^d$
  be the projections, $\xi(t, \, x) := t$ and
  $\tau(t, \, x):= x$. For any~$\omega\in\SS^1$
  and~$s\geq(\xi\circ\gamma)(\omega)$,
  we have $(s, \, (\tau\circ\gamma)(\omega))\notin (U-y)^{-1}(\X)$
  because of~\eqref{Sigmay}. Then, the map
  $H\colon [0, \, 1]\times\SS^1\to\NN$,
  \[
   H(t, \, \omega) := (\RR_y\circ U) 
   \left( (1 - t)(\xi\circ\gamma)(\omega) + t, \ \,
   (\tau\circ\gamma)(\omega)\right)
   \qquad \textrm{for } (t, \, \omega)\in [0, \, 1]\times\SS^1
  \] 
  is well-defined and continuous. This maps provides a (free)
  homotopy between $H(0, \, \cdot) = \RR_y\circ U \circ\gamma$
  and~$H(1, \, \cdot) = u_*$, thus showing the existence 
  of a continuous lifting
  $V\colon([0, \, 1]\times Q^d)\setminus\Sigma_y\to\EE$
  of~$\RR_y\circ U$.
  Not only is~$V$ continuous, but also it is locally Lipschitz,
  because $\RR_y\circ U$ is locally Lipschitz
  on~$([0, \, 1]\times Q^d)\setminus\Sigma_y$
  and the map~$\pi$ is a local isometry.

  We define~$v(x) := V(0, \, x)$ for~$x\in
  Q^d\setminus T_y$. Then, 
  $v\colon  Q^d\setminus T_y\to\EE$ is a
  locally Lipschitz lifting of~$\RR_y\circ u$, and 
  because~$\pi$ is a local isometry, we deduce that
  \begin{equation} \label{smooth2}
   \abs{\nabla v} = \abs{\nabla(\RR_y\circ u)}
   \qquad \textrm{a.e. on }  Q^d\setminus T_y
  \end{equation}
  via the chain rule. As a consequence,
  \begin{equation} \label{locLip}
   \nabla v \in L^\infty_{\mathrm{loc}}(Q^d\setminus S_y,
   \, \R^{\ell\times d})
  \end{equation}
  because~$\RR_y\circ u$ is locally Lipschitz on~$Q^d\setminus S_y$.
  However, the distributional derivative $\D v$ of~$v$
  does not coincide with~$\nabla v$, in general; it will also 
  contain a singular part, which is carried by~$T_y$.
 \end{step}
 
 \begin{step}[Bounds on the jump of~$v$]
  Given two points~$w_1, \, w_2\in\EE$, we denote 
  by~$\dist_{\EE}(w_1, \, w_2)$ the geodesic distance between them.
  Thanks to~\eqref{locLip}, the map~$v$ has well-defined traces
  $v^+$, $v^-$ on either side of~$T_y$, $\H^{d-1}$-a.e. on~$T_y$.
  We claim that there exists a constant~$C_\Lambda$, depending only
  on~$\NN$, $m$, $\X$, $\RR$ and~$\Lambda$, such that
  \begin{equation} \label{smooth3}
   \dist_{\EE}(v^+(x), \, v^-(x)) \leq C_\Lambda 
   \qquad \textrm{for } \H^{d-1}\textrm{-a.e. } x\in T_y.
  \end{equation}
  For this purpose, take a point~$x\in T_y\setminus S_y$ that
  belongs to the interior of a $(d-1)$-polyhedron of~$T_y$.
  Let $L$ be a straight line segment
  that is orthogonal to~$T_y$ at~$x$, contains~$x$ in its interior,
  and intersects~$T_y$ only at~$x$.
  Let~$x_-$, $x_+$ be the endpoints of~$L$.
  Since~$\RR_y\circ u$ is Lispchitz continuous
  in a neighbourhood of~$x\in Q^d\setminus S_y$,
  we may take $L$ so small that 
  \begin{equation} \label{smooth3-1}
   \int_{L}\abs{\nabla(\RR_y\circ u)} \, \d\H^1\leq 1.
  \end{equation}
  We define $\gamma\colon [0, \, 4]\to\NN$, 
  $\tilde{\gamma}\colon [0, \, 4]\to\EE$ as
  \begin{gather*}
   \gamma(t) := \begin{cases}
                (\RR_y\circ u)\left((1-t)x + t x_+\right)
                       & \textrm{if } 0 \leq t \leq 1 \\
                (\RR_y\circ U)(t - 1, \, x_+)
                       & \textrm{if } 1 \leq t \leq 2 \\
                (\RR_y\circ U)(3 - t, \, x_-)
                       & \textrm{if } 2 \leq t \leq 3 \\
                (\RR_y\circ u)\left((4-t)x_- + (t-3) x\right)
                      & \textrm{if } 3 \leq t \leq 4,
               \end{cases} \\
  \tilde{\gamma}(t) := \begin{cases}
                v\left((1-t)x + t x_+\right)
                       & \textrm{if } 0 \leq t \leq 1 \\
                V(t - 1, \, x_+)
                       & \textrm{if } 1 \leq t \leq 2 \\
                V(3 - t, \, x_-)
                       & \textrm{if } 2 \leq t \leq 3 \\
                v\left((4-t)x_- + (t-3) x\right)
                      & \textrm{if } 3 \leq t \leq 4.
               \end{cases}
  \end{gather*}
  Using that $U(0, \, \cdot) = u$ and~$U(1, \, \cdot)$ is constant,
  it can be checked that $\gamma$ is indeed continuous ---
  in fact, Lipschitz. The map~$\tilde{\gamma}$ is also well-defined 
  and Lipschitz. Indeed, $V$ is continuous on~$\{1\}\times Q^d$,
  because $(\{1\}\times Q^d)\cap\Sigma_y=\emptyset$,
  and $\pi\circ V$ is constant on~$\{1\}\times Q^d$,
  so $V$ must be constant on~$\{1\}\times Q^d$, too.
  Now, $\tilde{\gamma}$ is a lifting of~$\gamma$. Since~$\pi$
  is a local isometry, we must have
  \begin{equation} \label{smooth3-2}
   \dist_{\EE}(v^{+}(x), \, v^-(x)) \leq 
   \int_0^4 \abs{\tilde{\gamma}^\prime(t)} \, \d t
   = \int_0^4 \abs{\gamma^\prime(t)} \, \d t.
  \end{equation}
  The maps $t\in [0, \, 1]\mapsto U(t,\ x_+)$, 
  $t\in [0, \, 1]\mapsto U(t,\ x_-)$ are well-defined and Lipschitz
  (because 
  $x_+\notin T_y$, $x_-\notin T_y$),
  and parametrise injectively straight lines segments that 
  are contained in~$Q^m_\Lambda\setminus\X$. Therefore,
  by applying Proposition~\ref{prop:X} (iv) and~\eqref{smooth3-1},
  we deduce that
  \begin{equation} \label{smooth3-3}
   \int_0^4 \abs{\gamma^\prime(t)} \, \d t 
   \leq C_\Lambda.
  \end{equation}
  By combining~\eqref{smooth3-2} and~\eqref{smooth3-3}, 
  the claim~\eqref{smooth3} follows.
 \end{step}

 \begin{step}[Conclusion, in case~$d\geq 2$]
  From~\eqref{smooth3}, we immediately obtain
  \begin{equation} \label{smooth4}
   \abs{v^+(x) - v^-(x)} \leq C_\Lambda \qquad 
   \textrm{for } \H^1\textrm{-a.e. } x\in T_y.
  \end{equation}
  From~\eqref{smooth1}, \eqref{smooth2}, \eqref{locLip}
  and~\eqref{smooth4}, we deduce that the distributional 
  derivative of~$\D v$ is a bounded measure, with
  \begin{equation} \label{smooth5}
   \abs{\D v} \! (Q^d) \leq \norm{\nabla v}_{L^1(Q^d\setminus T_y)}
   + C_\Lambda \H^{d-1}(T_y)
   \leq C_\Lambda\norm{\nabla u}_{L^1(Q^d)} \! .
  \end{equation}
  By a Poincar\'e-type inequality in the space~BV
  (see e.g. \cite[Eq.~(16)]{Chiron-trace}), 
  there exist~$w_*\in\EE$ and a constant~$C$ 
  (depending only on~$d$) such that
  \begin{equation} \label{Poincare}
   \int_{Q^d} \dist_{\EE}(v(x), \, w_*) \, \d x \leq
   C\abs{\D v}\!(Q^d).
  \end{equation}
  By Lemma~\ref{lemma:cover}, and up to composition
  with an element of~$\Aut(\pi)$, we may assume without
  loss of generality that
  \begin{equation} \label{smooth6}
   w_*\in E_*,
  \end{equation}
  where~$E_*$ is the compact subset of~$\EE$
  given by Lemma~\ref{lemma:cover}. Now, the proposition follows 
  from \eqref{smooth5}, \eqref{Poincare} and~\eqref{smooth6}.
 \end{step}
 
 \begin{step}[The case~$d=1$]
  In case~$d=1$, Lemma~\ref{lemma:projection} 
  implies that~$\RR_y\circ u$ is continuous
  on~$[-1, \, 1]$ for a.e.~$y$,
  via Sobolev embedding. As a consequence, 
  for a.e.~$y$ the map~$\RR_y\circ u$ has a continuous
  lifting~$v\colon [-1, \, 1]\to\EE$ and actually,
  $v\in W^{1,1}(-1, \, 1; \, \EE)$ because~$\pi$ is a local isometry.
  Now Proposition~\ref{prop:affine} follows 
  from the same arguments as above.
  \qedhere
 \end{step}
\end{proof}


\begin{remark}
 In case~$\pi_1(\NN)$ is finite, the proof fo Proposition~\ref{prop:affine}
 simplifies considerably. Indeed, if~$\NN$ is compact and~$\pi_1(\NN)$ is finite,
 then $\EE$ is compact and hence, the estimate~\eqref{smooth3}
 is immediate. The finiteness of~$\pi_1(\NN)$ proves to be quite useful 
 in other contexts too, for instance, in the asymptotic analysis of minimisers 
 for variational problems
 \cite{pirla, pirla3}, or in the study of extension problems for manifold-valued maps
 \cite{Bethuel-Extension, MironescuVanSchaftingen}.
\end{remark}

\section{Proof of Theorem~\ref{th:main}}
\label{sect:main}

 Let~$\Omega\subseteq\R^d$ be a bounded, smooth domain,
 and let~$u\in\BV(\Omega, \, \NN)$.
 We first reduce to the case~$\Omega$ is a cube.
 Up to scaling, we may assume without loss of generality that
 $\overline{\Omega}\subseteq Q^d := (-1, \, 1)^d$.
 Let $u_\Omega := \L^d(\Omega)^{-1}\int_\Omega u\in\R^m$
 be the average of~$u$ over~$\Omega$.
 Thanks to \cite[Proposition~3.21]{AmbrosioFuscoPallara} and the
 BV-Poincar\'e inequality \cite[Theorem~3.44]{AmbrosioFuscoPallara},
 we can extend~$u - u_\Omega$
 to a map~$\tilde{u}\in(L^\infty\cap\BV)(Q^d, \, \R^m)$ 
 that satisfies $\abs{\D\tilde{u}}\!(Q^d)\leq C\abs{\D u}\!(\Omega)$,
 for some constant~$C$ depending only on~$\Omega$. 
 By re-defining $u := \tilde{u} + u_\Omega$, we obtain an
 extension of the map we had before. The new map belongs 
 to~$(L^\infty\cap\BV)(Q^d, \, \R^m)$ and satisfies
 \begin{equation} \label{main0}
  \abs{\D u}\!(Q^d)\leq C\abs{\D u}\!(\Omega).
 \end{equation}
 We can approximate $u$ with a sequence of smooth maps 
 $\tilde{u}_j\colon Q^d\to\R^m$ that
 converge to~$u$ weakly in~$\BV(Q^d)$, strongly in~$L^1(Q^d)$
 and a.e., and moreover
 \begin{equation} \label{main1}
  \lim_{j\to+\infty} \norm{\nabla\tilde{u}_j}_{L^1(Q^d)}
  \leq \abs{\D u} \! (Q^d)
 \end{equation}
 (see e.g. \cite[Theorem~3.9
 ]{AmbrosioFuscoPallara}).
 By~\eqref{Lambda} and a truncation argument, we may also assume that
 \begin{equation*} 
  \tilde{u}_j(x)\in Q^m_\Lambda \qquad \textrm{for any }
  x\in Q^d \textrm{ and any } j.
 \end{equation*}
 Finally, for any~$j\in\N$ we may choose a piecewise-affine 
 interpolant~$u_j\colon Q^d\to Q^m_\Lambda$ 
 of~$\tilde{u}_j$ in such a way that
 \begin{equation} \label{main3}
  \norm{u_j - \tilde{u}_j}_{L^1(Q^d)}
  + \norm{\nabla u_j - \nabla \tilde{u}_j}_{L^1(Q^d)}
  \leq 1/j.
 \end{equation}
 By applying Proposition~\ref{prop:affine}, \eqref{main1} and~\eqref{main3},
 for any~$j$ we find~$y_j\in B^m_\sigma$ and a
 lifting~$v_j\in\BV(Q^d , \, \EE)$
 of~$\RR_{y_j}\circ u_j$ such that
 \begin{equation*} 
  \limsup_{j\to+\infty }\norm{v_j}_{L^1(Q^d)}
  \leq C_\Lambda \left(\abs{\D u} \! (\Omega) + 1\right) 
  \! , \qquad
  \limsup_{j\to+\infty }\abs{\D v_j} \! (Q^d)
  \leq C_\Lambda \abs{\D u} \! (\Omega).
 \end{equation*}
 Therefore, there exist $y\in B^m_\sigma$
 and~$v\in\BV(Q^d, \, \EE)$ such that,
 up to extraction of subsequences, $y_j\to y$, 
 $v_j\rightharpoonup v$ weakly in~$\BV(Q^d)$, 
 strongly in~$L^1(Q^d)$ and a.e.~on~$Q^d$.
 (The set~$\BV(\Omega, \, \EE)$ is closed with respect to the 
 strong $L^1$-convergence, because we have embedded~$\EE$
 as a \emph{closed} subset of~$\R^{\ell}$;
 see~\cite{Muller-ClosedEmbeddings}.)
 Since~$\RR$ is smooth in a neighbourhood of~$\NN$ and~$\NN$ is compact, 
 $\RR_{y_j}\to\RR_{y}$ uniformly in a neighbourhood of~$\NN$. 
 For a.e.~$x\in\Omega$, we have~$u_j(x)\to u(x)\in\NN$
 and hence, $u_j(x)$ is arbitrarily close to~$\NN$
 for~$j$ large (depending on~$x$). As a consequence, 
 \[
  \pi\circ v_j = \RR_{y_j}\circ u_j \to \RR_{y}\circ u = u
  \qquad \textrm{a.e. on } \Omega
 \]
 and~$v_{|\Omega}$ is a lifting of~$u$.
 To complete the proof of Theorem~\ref{th:main},
 it only remains to check that
 $v\in\SBV(\Omega, \, \EE)$ in case~$u\in\SBV(\Omega, \, \NN)$.
 This can be done, e.g., by repeating word 
 by word the arguments of \cite[Theorem~3, Step~4]{CO1}.
\qedhere

\bibliographystyle{plain}
\bibliography{singular_set}

\newcommand{\noop}[1]{}
\begin{thebibliography}{10}

\bibitem{AmbrosioFuscoPallara}
L.~Ambrosio, N.~Fusco, and D.~Pallara.
\newblock {\em Functions of bounded variation and free discontinuity problems}.
\newblock Oxford Mathematical Monographs. The Clarendon Press, Oxford
  University Press, New York, 2000.

\bibitem{BallBedford}
J.~M. Ball and S.~J. Bedford.
\newblock Discontinuous order parameters in liquid crystal theories.
\newblock {\em Molecular Crystals and Liquid Crystals}, 612(1):1--23, 2015.

\bibitem{BallZarnescu}
J.~M. Ball and A.~Zarnescu.
\newblock Orientability and energy minimization in liquid crystal models.
\newblock {\em Arch. Rational Mech. Anal.}, 202(2):493--535, 2011.

\bibitem{Bedford}
S.~Bedford.
\newblock Function spaces for liquid crystals.
\newblock {\em Archive for Rational Mechanics and Analysis}, 219(2):937--984,
  2016.

\bibitem{Bethuel-Extension}
F.~Bethuel.
\newblock A new obstruction to the extension problem for {S}obolev maps between
  manifolds.
\newblock {\em Journal of Fixed Point Theory and Applications}, 15(1):155--183,
  Mar 2014.

\bibitem{BethuelChiron}
F.~Bethuel and D.~Chiron.
\newblock Some questions related to the lifting problem in {S}obolev spaces.
\newblock {\em {Contemporary Mathematics}}, 446:125--152, 2007.

\bibitem{BethuelDemengel}
F.~Bethuel and F.~Demengel.
\newblock Extensions for {S}obolev mappings between manifolds.
\newblock {\em Cal. Var. Partial Differential Equations}, 3(4):475--491, 1995.

\bibitem{BethuelZheng}
F.~Bethuel and X.~Zheng.
\newblock Density of smooth functions between two manifolds in {S}obolev
  spaces.
\newblock {\em J. Funct. Anal.}, 80(1):60 -- 75, 1988.

\bibitem{BourgainBrezisMironescu2005}
J.~Bourgain, H.~Brezis, and P.~Mironescu.
\newblock Lifting, degree, and distributional jacobian revisited.
\newblock {\em Communications on Pure and Applied Mathematics}, 58(4):529--551,
  2005.

\bibitem{BousquetPonceVanSchaftingen-I}
P.~Bousquet, A.~C. Ponce, and J.~Van~Schaftingen.
\newblock Density of smooth maps for fractional {S}obolev spaces ${W}^{s,p}$
  into $\ell$-simply connected manifolds when $s\geq 1$.
\newblock 5(2):3--24, 2013.

\bibitem{pirla}
G.~Canevari.
\newblock Biaxiality in the asymptotic analysis of a 2{D} {L}andau-de {G}ennes
  model for liquid crystals.
\newblock {\em ESAIM~: Control, Optimisation and Calculus of Variations},
  21(1):101--137, 2015.

\bibitem{pirla3}
G.~Canevari.
\newblock Line defects in the small elastic constant limit of a
  three-dimensional {L}andau-de {G}ennes model.
\newblock {\em Archive for Rational Mechanics and Analysis}, 223(2):591--676,
  Feb 2017.

\bibitem{CO1}
G.~Canevari and G.~Orlandi.
\newblock Topological singular set of vector-valued maps, {I}: {A}pplications
  to manifold-constrained {S}obolev and {BV} spaces.
\newblock {\em Calculus of Variations and Partial Differential Equations},
  58(2):72, Mar 2019.

\bibitem{CrossFields}
A.~Chemin, F.~Henrotte, J.-F. Remacle, and J.~van Schaftingen.
\newblock {\em Representing Three-Dimensional Cross Fields Using Fourth Order
  Tensors}, pages 89--108.
\newblock Springer International Publishing, Cham, 2019.

\bibitem{Chiron-trace}
D.~Chiron.
\newblock On the definitions of {S}obolev and {BV} spaces into singular spaces
  and the trace problem.
\newblock {\em Commun. Contemp. Math.}, 9(4):473--513, 2007.

\bibitem{DavilaIgnat}
J.~D\'avila and R.~Ignat.
\newblock Lifting of {BV} functions with values in ${S}^1$.
\newblock {\em Comptes Rendus Mathematique}, 337(3):159--164, 2003.

\bibitem{Federer}
H.~Federer.
\newblock {\em Geometric measure theory}.
\newblock Die Grundlehren der mathematischen Wissenschaften, Band 153.
  Springer-Verlag New York Inc., New York, 1969.

\bibitem{GiaquintaModicaSoucek-II}
M.~Giaquinta, G.~Modica, and J.~Sou{\oldv{c}}ek.
\newblock {\em Cartesian currents in the calculus of variations. {II}},
  volume~38 of {\em Ergebnisse der Mathematik und ihrer Grenzgebiete. 3. Folge.
  A Series of Modern Surveys in Mathematics [Results in Mathematics and Related
  Areas. 3rd Series. A Series of Modern Surveys in Mathematics]}.
\newblock Springer-Verlag, Berlin, 1998.
\newblock Variational integrals.

\bibitem{HKL}
R.~Hardt, D.~Kinderlehrer, and F.-H. Lin.
\newblock Existence and partial regularity of static liquid crystal
  configurations.
\newblock {\em Comm. Math. Phys.}, 105(4):547--570, 1986.

\bibitem{HardtLin-Minimizing}
R.~Hardt and F.-H. Lin.
\newblock Mappings minimizing the {$L^p$} norm of the gradient.
\newblock {\em Comm. Pure Appl. Math.}, 40(5):555--588, 1987.

\bibitem{Hatcher}
A.~Hatcher.
\newblock {\em {A}lgebraic {T}opology}.
\newblock Cambridge University Press, Cambridge, 2002.

\bibitem{Hirsch}
M.~W. Hirsch.
\newblock {\em Differential topology}.
\newblock Springer-Verlag, New York-Heidelberg, 1976.
\newblock Graduate Texts in Mathematics, No. 33.

\bibitem{Hopper}
C.~P. Hopper.
\newblock Partial regularity for holonomic minimisers of quasiconvex
  functionals.
\newblock {\em Arch. Rational Mech. Anal.}, 222(1):91--141, Oct 2016.

\bibitem{Ignat-Lifting}
R.~Ignat.
\newblock The space ${BV}({S}^2,{S}^1)$: minimal connection and optimal
  lifting.
\newblock {\em Annales de l'Institut Henri Poincare (C) Non Linear Analysis},
  22(3):283--302, 2005.

\bibitem{IgnatLamy}
R.~Ignat and X.~Lamy.
\newblock Lifting of $\mathbb{{RP}}^{d-1}$-valued maps in {BV} and applications
  to uniaxial {Q}-tensors. with an appendix on an intrinsic {BV}-energy for
  manifold-valued maps.
\newblock {\em Calculus of Variations and Partial Differential Equations},
  58(2):68, Mar 2019.

\bibitem{Lee-Topological}
J.~M. Lee.
\newblock {\em Introduction to topological manifolds}, volume 202 of {\em
  Graduate Texts in Mathematics}.
\newblock Springer, New York, second edition, 2011.

\bibitem{Mermin}
N.~D. Mermin.
\newblock The topological theory of defects in ordered media.
\newblock {\em Rev. Modern Phys.}, 51(3):591--648, 1979.

\bibitem{MironescuRussSire}
P.~Mironescu, E.~Russ, and Y.~Sire.
\newblock Lifting in {B}esov spaces.
\newblock {\em Nonlinear Analysis}, 2019.

\bibitem{MironescuVanSchaftingen}
P.~Mironescu and J.~Van~Schaftingen.
\newblock Lifting in compact covering spaces for fractional {S}obolev mappings.
\newblock Preprint ar{X}iv: 1907.01373, 2019.

\bibitem{Mucci-DCDS}
D.~Mucci.
\newblock Maps into projective spaces: liquid crystal and conformal energies.
\newblock {\em Discrete Cont. Dyn.-B}, 17(2):597--635, 2012.

\bibitem{Muller-ClosedEmbeddings}
O.~M{\"u}ller.
\newblock A note on closed isometric embeddings.
\newblock {\em Journal of Mathematical Analysis and Applications}, 349(1):297
  -- 298, 2009.

\bibitem{Nash56}
J.~Nash.
\newblock The imbedding problem for {R}iemannian manifolds.
\newblock {\em Annals of Mathematics}, 63(1):20--63, 1956.

\end{thebibliography}

\end{document}